\def\titlerunning#1{\gdef\titrun{#1}}
\def\author#1{\gdef\autrun{\def\and{\unskip, }#1}\gdef\@author{#1}}
\def\address#1{{\def\and{\\\hspace*{18pt}}\renewcommand{\thefootnote}{}%
\footnote {#1}}%
\markboth{\autrun}{\titrun}}
\def\email#1{\hspace*{4pt}{\em e-mail}: #1}
\newtheorem{thm}{Theorem}[section]
\newtheorem{prop}[thm]{Proposition}
\newtheorem{lemma}[thm]{Lemma}
\newtheorem{cor}[thm]{Corollary}
\theoremstyle{definition}
\newtheorem{rem}[thm]{Remark}
\newtheorem{defi}[thm]{Definition}
\newtheorem{example}[thm]{Example}
\newtheorem{result}[thm]{Result}
\newtheorem{notation}[thm]{Notation}
\newtheoremstyle{dotless}{}{}{\itshape}{}{\bfseries}{}{ }{}
\newcommand{\E}{{\mathcal E}}
\newcommand{\F}{{\mathbb F}}
\DeclareMathOperator\SRG{SRG}
\DeclareMathOperator\RG{RG}
\DeclareMathOperator\DDG{DDG}
\newcommand{\comments}[1]{}
\newcommand{\gs}[3]{\genfrac{[}{]}{0pt}{}{#1}{#2}_{#3}}
\def\F{\mathbb{F}}
\def\N{\mathbb{N}}
\def\S{\mathcal{S}}
\def\cQ{\mathcal{Q}}
\def\cR{\mathcal{R}}
\DeclareMathOperator{\PG}{PG}
\DeclareMathOperator{\GL}{GL}
\DeclareMathOperator{\PGL}{PGL}
\begin{document}

\titlerunning{}
\title{$q$-Analogs of divisible design graphs and Deza graphs}

\author{Dean Crnkovi\' c \and Maarten De Boeck \and Francesco Pavese \and Andrea \v Svob}
\date{}

\maketitle

\address{D. Crnkovi\'{c}, A. \v Svob: Faculty of Mathematics, University of Rijeka, Croatia;
\email{\{deanc,asvob\} @math.uniri.hr}
\and
M. De Boeck: Department of Mathematical Sciences, University of Memphis, TN, USA; 
\email{mdeboeck@memphis.edu} 
\and
F. Pavese: Dipartimento di Meccanica, Matematica e Management, Politecnico di Bari, Italy;
\email{francesco.pavese@poliba.it}
%Corresponding author: D. Crnkovi\'c
}

\begin{abstract}
Divisible design graphs were introduced in 2011 by Haemers, Kharaghani and Meulenberg. In this paper, we introduce the notion of $q$-analogs of divisible design graphs and show that all $q$-analogs of divisible design graphs come from spreads, and are actually $q$-analogs of strongly regular graphs.
\par Deza graphs were introduced by Erickson, Fernando, Haemers and Hardy in 1999. In this paper, we introduce $q$-analogs of Deza graphs. Further, we determine possible parameters, give examples of $q$-analogs of Deza graphs and characterize all non-strongly regular $q$-analogs of Deza graphs with the smallest parameters.
\end{abstract}

\bigskip

{\bf 2020 Mathematics Subject Classification:} 05E30, 05B05.

{\bf Keywords:} divisible design, Deza graph, $q$-analog of graph, $q$-analog of design.

\section{Introduction}\label{intro}

A graph $\Gamma$ can be interpreted as a design by taking the vertices of $\Gamma$ as points, and the neighborhoods of the vertices as blocks. Such a design is called the \emph{neighborhood design} of $\Gamma$. The adjacency matrix of $\Gamma$ is the incidence matrix of its neighborhood design.

A $k$-regular graph on $v$ vertices with the property that any two distinct vertices have exactly $\lambda$ common neighbors is called a $(v,k, \lambda)$-graph (see \cite{rudvalis}) or \emph{edge-regular} with parameters $(v,k,\lambda)$. The neighborhood design of a $(v,k, \lambda)$-graph is a symmetric $(v,k, \lambda)$ design. Haemers, Kharaghani and Meulenberg have defined divisible design graphs (DDGs for short) as a generalization of $(v,k, \lambda)$-graphs (see \cite{ddg}).

An incidence structure with $v$ points and the constant block size $k$ is a (group) \emph{divisible design} with parameters $(v,k, \lambda_1, \lambda_2, m,n)$ whenever the point set can be partitioned into $m$ classes of size $n$, such that two points from the same class are incident with exactly $\lambda_1$ common blocks, and two points from different classes are incident with exactly $\lambda_2$ common blocks. A divisible design $D$ is said to be \emph{symmetric} (or to have the dual property) if the dual of $D$ is a divisible design with the same parameters as $D$.

A $k$-regular graph with $v$ vertices is a \emph{divisible design graph (DDG)} with parameters $(v,k, \lambda_1, \lambda_2, m,n)$ if the vertex set can be partitioned into $m$ classes of size $n$ such that two vertices of the same class have precisely $\lambda_1$ common neighbors, and two vertices from different classes have precisely $\lambda_2$ common neighbors. Note that a DDG with $m = 1$, or $n = 1$, or $\lambda_1 = \lambda_2$ is a $(v, k, \lambda)$-graph. If this is the case, we call the DDG \emph{improper}, otherwise it is called \emph{proper}. The definition of a DDG yields the following theorem (see \cite{ddg}).

\begin{thm}
    If $D$ is a divisible design graph with parameters $(v,k, \lambda_1, \lambda_2, m,n)$ then its neighborhood design is a symmetric divisible design $(v,k, \lambda_1, \lambda_2, m,n)$.
\end{thm}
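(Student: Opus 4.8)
The plan is to work entirely with the adjacency matrix $A$ of the DDG, exploiting the fact that $A$ is symmetric and simultaneously serves as the incidence matrix of the neighborhood design. First I would record the elementary structural facts: the neighborhood design has $v$ points (the vertices) and $v$ blocks (the neighborhoods $N(y)$), each block has size exactly $k$ by $k$-regularity, and the point set carries the same partition into $m$ classes of size $n$ that is given for the graph. The incidence matrix of this design is precisely $A$, since a point $x$ lies in the block $N(y)$ if and only if $x \sim y$, that is, $A_{xy}=1$.

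The heart of the argument is to count, for two distinct points $x,x'$, the number of blocks containing both. This number equals $\sum_y A_{xy}A_{x'y} = (A A^{\top})_{xx'} = (A^2)_{xx'}$, where the last equality uses that $A$ is symmetric. But $(A^2)_{xx'}$ is exactly the number of common neighbors of $x$ and $x'$ in $\Gamma$, which by the defining property of a DDG equals $\lambda_1$ when $x,x'$ lie in the same class and $\lambda_2$ when they lie in different classes. This establishes directly that the neighborhood design is a divisible design with parameters $(v,k,\lambda_1,\lambda_2,m,n)$.

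It remains to verify the dual property. Here I would again use the symmetry of $A$: the incidence matrix of the dual design is $A^{\top} = A$, so the dual has the very same incidence matrix as the original design. Equivalently, the map sending each vertex, viewed as a point, to its own neighborhood, viewed as a block, is an isomorphism from the neighborhood design to its dual, under which the class of a point transfers to the class of the corresponding block. Consequently the dual is a divisible design with identical parameters, and the neighborhood design is symmetric.

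I expect no serious obstacle in this proof; the only point requiring care is the bookkeeping in the dual, namely checking that the class partition on the blocks induced by $A^{\top}=A$ matches the original partition on the points, so that the two values $\lambda_1,\lambda_2$ are genuinely preserved in the dual rather than merely some divisible-design structure appearing. Once the identification ``block $N(y) \leftrightarrow$ vertex $y$'' is made explicit, this is immediate: two dual points $N(y),N(y')$ share a dual block precisely when some $x$ is a common neighbor of $y$ and $y'$, so their number of common dual blocks is again $(A^2)_{yy'}$, giving $\lambda_1$ or $\lambda_2$ according to the same class relation.
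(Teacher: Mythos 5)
Your proof is correct; the paper itself gives no proof of this theorem but cites Haemers--Kharaghani--Meulenberg, whose argument is exactly the one you give: the adjacency matrix $A$ is the incidence matrix of the neighborhood design, the DDG condition determines $(A^2)_{xx'}=(AA^{\top})_{xx'}=(A^{\top}A)_{xx'}$ for distinct $x,x'$, and symmetry of $A$ yields the dual property with the same class structure. Your closing remark about transferring the class partition to the blocks via the identification $N(y)\leftrightarrow y$ is the right point to make explicit, and you handle it correctly.
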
 

Conversely, a symmetric divisible design with a polarity with no absolute points is the neighborhood design of a DDG (see \cite{ddg}). We also have the following counting result (\cite[Section 2]{ddg}).

\begin{lemma}\label{lem:ddgcounting}
    For a divisible design graph with parameters $(v,k, \lambda_1, \lambda_2, m,n)$ we have $k^{2}=k+\lambda_{1}(n-1)+\lambda_{2}n(m-1)$ and $v=mn$.
\end{lemma}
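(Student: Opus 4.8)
The plan is to read off $v = mn$ directly from the definition and then obtain the quadratic relation by a standard double-counting of paths of length two, encoded in the square of the adjacency matrix.

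First, since the vertex set is partitioned into $m$ classes each of size $n$, the total number of vertices is $v = mn$; this requires no further argument. For the second identity, I would let $A$ denote the adjacency matrix of the DDG, so that $(A^2)_{xy}$ equals the number of common neighbors of $x$ and $y$ when $x \neq y$, while $(A^2)_{xx}$ equals the degree of $x$, which is $k$ by $k$-regularity. The idea is to evaluate a single row sum of $A^2$ in two different ways. On one hand, because the graph is $k$-regular we have $A\mathbf{1} = k\mathbf{1}$, hence $A^2 \mathbf{1} = k^2 \mathbf{1}$ and every row of $A^2$ sums to $k^2$. On the other hand, fixing a vertex $x$ and splitting the sum $\sum_y (A^2)_{xy}$ according to the partition: the diagonal term $y = x$ contributes $k$; the $n-1$ vertices $y \neq x$ in the same class as $x$ each contribute $\lambda_1$, giving $\lambda_1(n-1)$; and the $v - n = n(m-1)$ vertices $y$ lying in the other classes each contribute $\lambda_2$, giving $\lambda_2 n(m-1)$. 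Equating the two evaluations yields $k^2 = k + \lambda_1(n-1) + \lambda_2 n(m-1)$.

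There is no serious obstacle here; the only point demanding a little care is the bookkeeping in the partition-based split, namely that exactly $n-1$ vertices lie in the same class as $x$ besides $x$ itself, and exactly $n(m-1)$ lie in the remaining $m-1$ classes, which together with the single diagonal contribution account for all $v = mn$ entries of the row.
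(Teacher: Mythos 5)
Your proof is correct. The paper itself only cites this lemma from Haemers--Kharaghani--Meulenberg without reproving it, but its $q$-analog (Theorem~\ref{th:ddgstandardequation}) is proved by counting triples $(X_{1},X_{2},Y)$ with $Y$ a common neighbor of $X_{1}$ and $X_{2}$, which is exactly your row-sum evaluation of $A^{2}$ phrased combinatorially; the two arguments are the same double count.
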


Now we look at a second class of regular graphs, generalizing the DDG's. \emph{Deza graphs} were introduced by Erickson, Fernando, Haemers and Hardy in \cite{deza}, as a generalization of strongly regular graphs. Let $v$, $k$, $b$ and $a$ be integers such that $0 \leq a < b \leq k < v$. A $k$-regular graph on $v$ vertices is called a Deza graph with parameters $(v, k, b, a)$ if any two distinct vertices have either $a$ or $b$ common neighbors. Recently, Deza graphs attended a lot of attention of researchers, see e.g. \cite{spectra-deza, Deza-Haemers,Deza-Goryainov,Deza-Kabanov}.

Given a vertex of a Deza graph we know with how many other vertices it has precisely $b$ (or precisely $a$) common neighbors (\cite[Proposition 1.1]{deza}).

\begin{lemma}\label{lem:dezacounting}
    For a Deza graph $\Gamma=(V,E)$ with parameters $(v,k,b,a)$ with $b>a$, and for $u\in V$ we have that
    \begin{align*}
        |\{v\in V\mid |N_{\Gamma}(u)\cap N_{\Gamma}(v)|=b\}|=\frac{k(k-1)-a(v-1)}{b-a}.
    \end{align*}
    In this statement, we can interchange the roles of $b$ and $a$. Note that the result is independent of the vertex $u$.
\end{lemma}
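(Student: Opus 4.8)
The plan is to prove the formula by a double-counting argument applied to the length-two walks from the fixed vertex $u$. First I would fix $u\in V$ and split the remaining $v-1$ vertices according to the Deza dichotomy: set $B_u=\{w\in V\setminus\{u\}\mid |N_{\Gamma}(u)\cap N_{\Gamma}(w)|=b\}$ and let $A_u=(V\setminus\{u\})\setminus B_u$ be the vertices with exactly $a$ common neighbors with $u$. Since $b>a$ every vertex distinct from $u$ lands in exactly one of these two sets, so $|A_u|+|B_u|=v-1$. (Here I use a fresh letter $w$ for the running vertex to avoid clashing with the parameter $v$.)

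The key identity to establish is $\sum_{w\in V\setminus\{u\}}|N_{\Gamma}(u)\cap N_{\Gamma}(w)|=k(k-1)$. I would prove this by reversing the order of summation: the left-hand side counts pairs $(w,x)$ with $w\neq u$, $x\in N_{\Gamma}(u)$, and $x\in N_{\Gamma}(w)$. Summing first over $x\in N_{\Gamma}(u)$, of which there are exactly $k$ by $k$-regularity, and then over the admissible $w$, each fixed $x$ contributes $|N_{\Gamma}(x)\setminus\{u\}|=k-1$, because $u$ is itself a neighbor of $x$ and $x$ has degree $k$. Summing over the $k$ choices of $x$ gives the total $k(k-1)$.

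Combining the two relations now yields a small linear system. On one hand $|A_u|+|B_u|=v-1$; on the other, grouping the sum according to whether the common-neighbor count equals $a$ or $b$ gives $a\,|A_u|+b\,|B_u|=k(k-1)$. Eliminating $|A_u|$ — multiply the first equation by $a$ and subtract it from the second — produces $(b-a)\,|B_u|=k(k-1)-a(v-1)$, and since $b>a$ I may divide by $b-a$ to obtain the claimed value of $|B_u|$. Interchanging the roles of $a$ and $b$ simply swaps the coefficients and delivers the symmetric count for $|A_u|$. Because the right-hand side contains only the global parameters $v,k,a,b$, the count does not depend on the chosen $u$, which is the last assertion of the statement.

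I do not anticipate a genuine obstacle; the proof is elementary. The only step demanding care is the bookkeeping in the double count, namely ensuring that the vertex $u$ is consistently excluded — it is absent from the outer sum, which ranges over $w\neq u$, and it is explicitly removed from $N_{\Gamma}(x)$ in the inner count, so no walk through $u$ is overcounted. Everything beyond that is linear algebra over the integers.
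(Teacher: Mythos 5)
Your proof is correct, and it is essentially the argument the paper itself uses: the lemma is quoted from \cite[Proposition 1.1]{deza} without proof, but the paper's proof of the $q$-analog (Theorem \ref{th:dezastandardequation}) proceeds by exactly the same device, namely double-counting the paths of length two from a fixed vertex and solving the resulting $2\times2$ linear system in $n_1=|B_u|$ and $n_2=|A_u|$. Your bookkeeping (excluding $u$ from the outer sum and noting $u\in N_{\Gamma}(x)$ for each $x\in N_{\Gamma}(u)$, giving the $k(k-1)$ total) is accurate, so nothing further is needed.
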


In \cite{delsarte}, Delsarte introduced the notion of \emph{$q$-analogs of designs} or \emph{subspace design}. The interest in this subject has been renewed recently due to applications of $q$-analog of designs to network coding and distributed storage, (see \cite{mario-subspace}). Let $\mathbb{F}_q^v$ be a vector space of dimension $v$ over the finite field $\mathbb{F}_q$. A subspace of vector space $\mathbb{F}_q^v$ of dimension $k$ will be called a $k$-subspace, or $k$-space. A (simple) $t$-$(v,k,\lambda;q)$ subspace design consists of a set $B$ of $k$-subspaces of $\mathbb{F}_q^v$, called blocks, such that each $t$-subspace of $\mathbb{F}_q^v$ is contained in exactly $\lambda$ blocks. Subspace designs are also called designs over finite fields, designs in vector spaces, designs in the $q$-Johnson scheme, and geometric designs (see e.g. \cite{spyros}). The set of all $k$-subspaces of $\mathbb{F}_q^v$ is always a design, called the \emph{trivial design}. Recently, in \cite{buratti} the authors introduced $q$-analogues of group divisible designs and in \cite{buratti2} gave more results on this topic.

In \cite{qsrg}, the following definitions of $q$-ary graph, regular $q$-ary graph and strongly regular $q$-ary graph are given. %The $q$-analog of a graph, also called $q$-ary graph, is a set 
%\[
%\E\subseteq{\F_q^v\brack 2}.
%\]
We use the notation $\gs{W}{s}{}$ for the set of $s$-dimensional subspaces of the vector space $W$.
\begin{defi}\label{def:qgraph}
	A \emph{$q$-ary graph} $\E$ is a pair $(V,E)$ where $V=\gs{W}{1}{}$ and $E\subseteq\gs{W}{2}{}$, with $W$ a vector space over $\F_{q}$. The elements of $V$ and $E$ are called \emph{vertices} and \emph{edges}, respectively. If $\langle X,Y\rangle$ is an edge, then we say that the vertices $X$ and $Y$ are \emph{adjacent}.
\end{defi}

A $q$-ary graph is thus a hypergraph whose vertices are the vector lines of a vector space $W$ over $\F_{q}$, and whose edges correspond to vector planes of $W$. A $q$-ary graph over $W$ is naturally embedded in the projective geometry $\PG(W)$. Its vertices are the points of the projective space $\PG(W)$ and the edge set is a subset of the line set. Based on a $q$-ary graph $\E$ we can define a corresponding simple graph whose vertices are the vertices of $\E$ and with two vertices adjacent if they are adjacent in $\E$ (see also \cite{qsrg}). A $q$-ary graph will be called (dis)connected if its corresponding simple graph is (dis)connected.

\begin{defi}
	For a vertex $X$ of a $q$-ary graph $\E$ we define its \emph{neighborhood} $N_{\E}(X)$ as the set of all vertices adjacent to it, and itself.
\end{defi}
%Let $\E\subseteq{\F_q^v\brack 2}$ be a $q$-ary graph. The neighborhood of $X\in{\F_q^v\brack 1}$ is defined to be 
%\begin{align*}
%N_{\E}(X)=\bigcup_{E\in\E:X\subseteq E}E.
%\end{align*}

%A $q$-ary graph $\E\in {\F_q^v\brack 2}$ is a regular $q$-ary graph of degree $k$ if and only if the following condition holds:
%\[
%\forall X\in{\F_q^v\brack 1}:N_{\E}(X)\in{\F_q^v\brack k+1}.
%\] 
%The set of all regular $q$-ary graphs with degree $k$ on the $v$-dimensional vector space $\F_q^v$ will be denoted by $\RG(v,k;q)$. 
\begin{defi}
	A $q$-ary graph $\E$ is \emph{$k$-regular} if for any vertex $X$ its neighborhood $N_{\E}(X)$ is a $(k+1)$-dimensional subspace. The set of all $k$-regular $q$-ary graphs with underlying vector space $\F_q^v$ will be denoted by $\RG(v,k;q)$.
\end{defi}

It is a basic result in graph theory that a $k$-regular graph has $\frac{vk}{2}$ edges. In particular, $vk$ is even. For regular $q$-graphs a similar result holds. For a prime power $q$ and an integer $i$ we denote $\frac{q^{i}-1}{q-1}$ by $[i]_{q}$; this is the number of vector lines in $\F_{q}^{i}$.

\begin{lemma}\label{lem:numberedges}
    A $k$-regular $q$-ary graph $\E$ has $\frac{[v]_q [k]_q}{[2]_q}$ edges.
\end{lemma}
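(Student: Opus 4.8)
The plan is to prove the statement by double counting the incident pairs $(X,L)$ consisting of a vertex $X$ and an edge $L$ with $X\subseteq L$, comparing the count obtained by summing over vertices with the one obtained by summing over edges.

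First I would fix a vertex $X$ and determine how many edges pass through it. Since $\E$ is $k$-regular, its neighborhood $N_{\E}(X)$ is a $(k+1)$-dimensional subspace containing $X$. I claim that the edges through $X$ are exactly the $2$-dimensional subspaces through $X$ that lie inside $N_{\E}(X)$. Indeed, if $\langle X,Y\rangle$ is an edge then $Y$ is adjacent to $X$, so $Y\in N_{\E}(X)$ and hence $\langle X,Y\rangle\subseteq N_{\E}(X)$; conversely, any $2$-space $L$ through $X$ with $L\subseteq N_{\E}(X)$ is spanned by $X$ together with some vertex $Y\in N_{\E}(X)\setminus\{X\}$, which is adjacent to $X$, so that $L=\langle X,Y\rangle$ is an edge. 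The number of $2$-spaces through $X$ inside the $(k+1)$-dimensional space $N_{\E}(X)$ equals the number of vector lines of the quotient $N_{\E}(X)/X$, which is $k$-dimensional; hence there are exactly $[k]_q$ edges through $X$, independently of $X$.

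Next I would record the two easy counts. The vertex set is $\gs{W}{1}{}$ with $\dim W=v$, so $|V|=[v]_q$. Moreover each edge is by definition a $2$-dimensional subspace, hence contains exactly $[2]_q$ vertices. Summing the incident pairs over the vertices gives $[v]_q\,[k]_q$, while summing over the edges gives $|E|\,[2]_q$. Equating the two expressions yields $|E|=\frac{[v]_q[k]_q}{[2]_q}$, which is the claim.

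The step that requires the most care is the identification in the first paragraph: one must verify both inclusions, namely that every edge through $X$ is contained in $N_{\E}(X)$ and that every $2$-space through $X$ inside $N_{\E}(X)$ is genuinely an edge. This is precisely where the regularity hypothesis enters, since it guarantees that $N_{\E}(X)$ is an actual subspace rather than merely a set of vertices; once this is established, the remaining subspace counts are routine.
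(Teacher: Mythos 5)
Your proof is correct and is exactly the paper's approach: the paper's entire proof is ``Count the incident vertex-edge pairs,'' and you have simply filled in the details of that double count (each vertex lies on $[k]_q$ edges via the quotient $N_{\E}(X)/X$, each edge contains $[2]_q$ vertices). No issues.
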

\begin{proof}
    Count the incident vertex-edge pairs.
\end{proof}

\begin{defi}
    A regular $q$-ary graph $\E\in\RG(v,k;q)$ is a strongly regular $q$-ary graph with parameters $(v,k,\lambda,\mu;q)$ if and only if for all vertices $X,Y$, with $X\ne Y$, we have that
	\[
	   |(N_{\E}(X)\cap N_{\E}(Y))\setminus\{X,Y\}|=\begin{cases}
	       \lambda&\text{if $X$ and $Y$ are adjacent,}\\
	       \mu&\text{otherwise.}
	   \end{cases}
	\]
    The set of all strongly regular $q$-ary graphs with these parameters $(v,k,\lambda,\mu)$ is denoted by $\SRG(v,k,\lambda,\mu;q)$.
\end{defi}
%A regular $q$-ary graph $\E\in\RG(v,k;q)$ is a strongly regular $q$-ary graph with parameters $(v,k,\lambda,\mu)$ if and only if the following condition holds for all $Y,Z\in{\F_q^v\brack 1}$ with $Y\ne Z$:
%\[
%|\{X\in{\F_q^v\brack 1}\mid X+Y\in\E\wedge X+Z\in\E\}|=\begin{cases}
%\lambda&\text{if $Y+Z\in \E$}\\
%\mu&\text{otherwise}
%\end{cases}.
%\]
%The set of all strongly regular $q$-ary graphs with these parameters $(v,k,\lambda,\mu)$ is denoted by $\SRG(v,k,\lambda,\mu;q)$.

In \cite{qsrg}, the authors show the following classification:

\begin{result}{\cite{qsrg}}\label{th:srgclass}
	If $\E\in SRG(v,k,\lambda,\mu;q)$, then
    \begin{enumerate}
        \item $\E$ is the disjoint union of $\frac{q^{v}-1}{q^{k+1}-1}$ complete $q$-graphs on the elements of a $(k+1)$-spread, or
        \item $\E$ has parameters $(v,v-2,\mu-2,\mu;q)$ with $\mu=\frac{q^{v-2}-1}{q-1}$ and arises from a symplectic polarity $\varphi$ on the vector space $\F_{q}^{v}$, $v$ even, in the following way: $\E$ is the $q$-ary graph on $\F_{q}^{v}$ whose edges are the totally isotropic vector planes of $\varphi$.
    \end{enumerate}
\end{result}

In Section \ref{sec:ddg}, we will now introduce and discuss the $q$-analog for divisible design graphs, and in Section \ref{sec:deza}, we will discuss the $q$-analog for Deza graphs.

\section{\texorpdfstring{$q$}{q}-Analogs of divisible design graphs}\label{sec:ddg}

In this section, we introduce the notion of a $q$-analog of divisible design graphs. Throughout the section $v$, $k$, and $n$ will denote positive integers and $\lambda_1$ and $\lambda_2$ non-negative integers. We first recall the definition of a vector space partition or spread.

\begin{defi}
    A set $\mathcal{G}$ of $n$-spaces of a vector space $W$ forms a \emph{vector space partition into subspaces of dimension $n$} or \emph{$n$-spread} if every vector in $W$ different from the zero vector, is in contained in precisely one of the $n$-spaces in $\mathcal{G}$.
\end{defi}

It is immediate that an $n$-spread of $\F_{q}^{v}$ has $\frac{q^{v}-1}{q^{n}-1}$ elements.

\begin{rem}
    Vector space partitions have often been discussed via their interpretation in a projective geometry context. The names `spreads' arises from this context. It is classic result by Segre that $\F_{q}^{v}$ admits a vector space partition into subspaces of dimension $n$ if and only if $n\mid v$ (\cite{segre}).
\end{rem}

\begin{defi}\label{qddg}
    Let $\mathcal{G}$ be an $n$-spread of $\F_q^v$. A $k$-regular $q$-ary graph $\E\in\RG(v,k;q)$ is a \emph{$q$-ary divisible design graph} (shortly $q$-DDG) with respect to $\mathcal{G}$ if for all vertices $X$ and $Y$ with $X\neq Y$ we have that
    \[
        |(N_{\E}(X)\cap N_{\E}(Y))\setminus\{X,Y\}|=
        \begin{cases}
            \lambda_1&\text{if a $G\in\mathcal{G}$ exists such that $X,Y\subseteq G$,}\\
            \lambda_2&\text{otherwise.}
	    \end{cases}
	\]
    %\[
        %|\{X\in{\F_q^v\brack 1}\mid X+Y\in\E\wedge X+Z\in\E\}|=
        %\begin{cases}
            %\lambda_1,&\text{if there exist $G \in \mathcal{G}$, such that $X, Y \leq G$}\\
            %\lambda_2,&\text{otherwise}
        %\end{cases}.
    %\]
    In this case $\mathcal{E}$ is said to have parameters $(v,k,\lambda_1,\lambda_2,n;q)$. The set of all $q$-ary divisible design graphs (for some $n$-spread) with parameters $(v,k,\lambda_1,\lambda_2,n;q)$ is denoted by $\DDG(v,k,\lambda_1,\lambda_2,m,n;q)$.
\end{defi}

\begin{rem}
    Note that a $q$-DDG with $n=v$ or $\lambda_1 = \lambda_2$ is a $q$-SRG with parameters $(v, k, \lambda_1,\lambda_1)$.
\end{rem}

In the following theorem, we show that a $\DDG(v,k,\lambda_1,\lambda_2,n;q)$ yields a classical DDG.

\begin{thm}\label{th:q-DDG->DDG}
    If a $q$-DDG with parameters $(v,k,\lambda_1,\lambda_2,n;q)$ exists, then a classical DDG with parameters $\left([v]_q,[k+1]_q-1,\lambda_1,\lambda_2,[n]_q\right)$ exists.
\end{thm}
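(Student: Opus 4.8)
The plan is to pass to the associated simple graph of $\E$ and to check the defining conditions of a classical DDG one at a time. Write $W=\F_q^v$, and let $\Gamma=(V,E_\Gamma)$ be the simple graph associated to the $q$-DDG $\E$ in the sense described after Definition~\ref{def:qgraph}: its vertex set is $V=\gs{W}{1}{}$, and two vertices are adjacent in $\Gamma$ exactly when they are adjacent in $\E$. Since $W$ has $[v]_q$ vector lines, $\Gamma$ has $[v]_q$ vertices, which is the first claimed parameter.

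First I would read off the regularity. By $k$-regularity of $\E$, for every vertex $X$ the neighbourhood $N_\E(X)$ is a $(k+1)$-dimensional subspace; read correctly, this is the set of \emph{all} vector lines of that $(k+1)$-space, namely $X$ together with the $[k+1]_q-1$ lines adjacent to $X$. Hence $X$ has exactly $[k+1]_q-1$ neighbours in $\Gamma$, so $\Gamma$ is $\left([k+1]_q-1\right)$-regular, the second claimed parameter. Next I would set up the partition of $V$: as $\mathcal{G}$ is an $n$-spread of $W$, every vector line is contained in a unique element of $\mathcal{G}$, so grouping the vertices according to the spread element containing them partitions $V$ into $|\mathcal{G}|=\frac{q^v-1}{q^n-1}$ classes. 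Each $G\in\mathcal{G}$ is an $n$-space and therefore contains $[n]_q$ vector lines, whence every class has size $[n]_q$; thus $m=[v]_q/[n]_q=\frac{q^v-1}{q^n-1}$, consistent with $v=mn$ for the derived design.

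Finally I would transfer the common-neighbour counts, which is the heart of the matter. For distinct vertices $X,Y$, a vertex $Z$ is a common neighbour of $X$ and $Y$ in $\Gamma$ if and only if $Z\in N_\E(X)\setminus\{X\}$ and $Z\in N_\E(Y)\setminus\{Y\}$, that is, if and only if $Z\in (N_\E(X)\cap N_\E(Y))\setminus\{X,Y\}$. Therefore the number of common neighbours of $X$ and $Y$ in $\Gamma$ equals the quantity controlled by Definition~\ref{qddg}. Since two vertices lie in a common class precisely when they lie in a common spread element, they have exactly $\lambda_1$ common neighbours in that case and exactly $\lambda_2$ common neighbours otherwise. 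This is precisely the defining property of a classical DDG with classes of size $[n]_q$, yielding the parameters $\left([v]_q,[k+1]_q-1,\lambda_1,\lambda_2,m,[n]_q\right)$.

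I do not anticipate a genuine obstacle: the argument is essentially a dictionary translation, and the only point requiring care is the reading of ``$N_\E(X)$ is a $(k+1)$-space'' as the assertion that $X$ is adjacent to every other line of that space. It is exactly this interpretation that makes both the degree computation and the identity $|\{\text{common neighbours of }X,Y\text{ in }\Gamma\}|=|(N_\E(X)\cap N_\E(Y))\setminus\{X,Y\}|$ pass through unchanged, and with that identity in hand the transfer of $\lambda_1$ and $\lambda_2$ is immediate.
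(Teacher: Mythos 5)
Your proposal is correct and follows essentially the same route as the paper: both pass to the associated simple graph on the vector lines, take the classes to be the vertex sets of the spread elements, and verify the degree and common-neighbour conditions directly from the definitions (the paper leaves this verification as "straightforward to check," which you have simply written out). The only cosmetic difference is that you also record the number of classes $m=\frac{q^v-1}{q^n-1}$, which the paper's parameter list leaves implicit.
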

\begin{proof}
    Consider a $q$-DDG $\E$ with parameters $\left(v,k,\lambda_1,\lambda_2,n;q\right)$ with respect to a vector space partition $\mathcal{G}$. Now define a (simple) graph $\Gamma$ with as vertex set the vertices of $\E$ (i.e.~the vector lines of $\F_{q}^{v}$) and such that two vertices of $\Gamma$ are adjacent if and only if the corresponding 1-subspaces belong to an edge of $\E$ (every hypergraph edge is replaced by edges consisting of two vertices in it). It is straightforward to check that $\Gamma$ is a DDG with parameters $([v]_q,[k+1]_q-1,\lambda_1,\lambda_2,[n]_q)$, where the classes correspond to the vertex sets of the elements of $\mathcal{G}$.
\end{proof}

\begin{example}
	The following are trivial examples of $q$-DDG's.
	\begin{enumerate}
		\item For any $n$-spread, the complete graph is a $q$-DDG with parameters $(v,v-1,\lambda,\lambda,n;q)$, with $\lambda=\frac{q^{v}-1}{q-1}-2$ and $n\mid v$.
		\item For any $n$-spread $S$, the disjoint union of the complete graphs on the elements of $S$ is a $q$-DDG with parameters $(v,n-1,\lambda,0,n;q)$, with $\lambda=\frac{q^{n}-1}{q-1}-2$ and $n\mid v$.
		\item For any $n$-spread the empty graph is a $q$-DDG with parameters $(v,0,0,0,n;q)$, with $n\mid v$.
	\end{enumerate}
\end{example}

Recall Lemma \ref{lem:ddgcounting} that for classical DDG's we have $k^{2}=k+\lambda_{1}(n-1)+\lambda_{2}n(m-1)$ with $v=mn$. Or in other words, $k^{2}=k+\lambda_{1}(n-1)+\lambda_{2}(v-n)$.

\begin{thm}\label{th:ddgstandardequation}
	If $\E$ is a q-DDG with parameters $(v,k,\lambda_{1},\lambda_{2},n;q)$, then
	%\begin{align*}
		%\left([k+1]_{q}-1\right)^{2}=\left([k+1]_{q}-1\right)+\lambda_{1}\left([n]_{q}-1\right)+\lambda_{2}[n]_{q}(m-1) \quad\text{ with }m=\frac{q^{v}-1}{q^{n}-1}.
	%\end{align*}
	%Or in other words,
	\begin{align*}
		\left([k+1]_{q}-1\right)^{2}=\left([k+1]_{q}-1\right)+\lambda_{1}\left([n]_{q}-1\right)+\lambda_{2}\left([v]_{q}-[n]_{q}\right).
	\end{align*}
\end{thm}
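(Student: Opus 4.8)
The plan is to deduce the identity directly from the two preceding results, namely Theorem~\ref{th:q-DDG->DDG}, which turns a $q$-DDG into a classical DDG, and the counting relation of Lemma~\ref{lem:ddgcounting}. Starting from a $q$-DDG $\E$ with parameters $(v,k,\lambda_1,\lambda_2,n;q)$, Theorem~\ref{th:q-DDG->DDG} furnishes a classical DDG $\Gamma$ with parameters $\bigl([v]_q,\,[k+1]_q-1,\,\lambda_1,\,\lambda_2,\,[n]_q\bigr)$, whose number of classes is $m'=[v]_q/[n]_q=(q^v-1)/(q^n-1)$, an integer precisely because $n\mid v$. I would then invoke the counting relation in the equivalent form recalled just before the statement, $k'^2=k'+\lambda_1'(n'-1)+\lambda_2'(v'-n')$, and substitute $v'=[v]_q$, $k'=[k+1]_q-1$, $n'=[n]_q$, $\lambda_1'=\lambda_1$, $\lambda_2'=\lambda_2$. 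This reproduces the claimed equation verbatim, so modulo the already-established reduction the statement is essentially a one-line invocation; the only thing to verify is that the parameter dictionary between the $q$-DDG and $\Gamma$ is applied correctly.

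As an alternative self-contained route, I would mimic the classical double-counting argument directly in the $q$-ary graph. Fix a vertex $X$; by $k$-regularity its neighborhood $N_{\E}(X)$ is a $(k+1)$-space, which contains $[k+1]_q$ vertices, one of which is $X$ itself, so $X$ has exactly $d:=[k+1]_q-1$ neighbors. I would count ordered walks $X\to Z\to Y$ of length two in which each consecutive pair is adjacent: choosing $Z$ among the $d$ neighbors of $X$ and then $Y$ among the $d$ neighbors of $Z$ gives $d^2$ walks in total.

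Grouping these walks by their endpoint $Y$, I would use that an intermediate vertex $Z$ is automatically distinct from both $X$ and $Y$, since adjacency of two vertices requires them to be distinct; hence for fixed $Y\ne X$ the number of admissible $Z$ equals $|(N_{\E}(X)\cap N_{\E}(Y))\setminus\{X,Y\}|$, which is $\lambda_1$ or $\lambda_2$ according to the definition of a $q$-DDG. The walks with $Y=X$ number $d$; the endpoints $Y\ne X$ lying in the same spread element as $X$ number $[n]_q-1$ and contribute $\lambda_1$ each; the remaining $[v]_q-[n]_q$ endpoints lie in other spread elements and contribute $\lambda_2$ each. Equating the two counts yields $d^2=d+\lambda_1([n]_q-1)+\lambda_2([v]_q-[n]_q)$, which is the desired identity. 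Neither route presents a genuine obstacle; the only point deserving care is the verification that an intermediate vertex of a length-two walk cannot coincide with $X$ or $Y$, so that the walk tally matches exactly the common-neighbor counts $\lambda_1,\lambda_2$ from the definition, together with the elementary bookkeeping that an $n$-space carries $[n]_q$ vertices and $\F_q^v$ carries $[v]_q$.
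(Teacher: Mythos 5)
Your second, self-contained argument is essentially the paper's own proof: the paper counts triples $(X_1,X_2,Y)$ with $Y$ a common neighbor over all choices of $X_1$, which introduces a factor $[v]_q$ on both sides that then cancels, whereas you fix the initial vertex; the bookkeeping (the $d$ walks returning to $X$, the $[n]_q-1$ endpoints in the same spread element, the $[v]_q-[n]_q$ others) is identical, and your care about the intermediate vertex being distinct from both endpoints correctly matches the definition of $N_{\E}$, which contains the vertex itself. Your first route, via Theorem~\ref{th:q-DDG->DDG} and Lemma~\ref{lem:ddgcounting}, is also valid and is exactly the shortcut the paper alludes to just before the statement, though the paper opts for the direct count instead.
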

\begin{proof}
	Let $\mathcal{G}$ be the $n$-spread with respect to which $\E$ is defined. We count the triples $(X_{1},X_{2},Y)$ with $X_{1},X_{2},Y$ vertices of $\E$, such that $Y\in N_{\E}(X_{1})\cap N_{\E}(X_{2})$ and $X_{1}\neq Y \neq X_{2}$. On the one hand, there are $[v]_{q}$ choices for $Y$. Given $Y$, we have $[k+1]_{q}-1$ choices for both $X_{1}$ and $X_{2}$. 
    \par On the other hand, there are $[v]_{q}$ choices for $X_{1}$. Given $X_1$, we have three options for $X_{2}$; let $G$ bet the element of $\mathcal{G}$ in which $X_1$ is contained. If $X_{1}=X_{2}$, then we have $[k+1]_{q}-1$ choices for $Y$. If $X_{1}\neq X_{2}$ and $X_{2}$ is contained in $G$, then there are $\lambda_1$ choices for $Y$. Note that there are $[n]_{q}-1$ vertices $X_2$ in this case. If $X_{2}$ is not contained in $G$, then there are $\lambda_2$ choices for $Y$. Note that there are $[v]_{q}-[n]_q$ vertices $X_2$ not in $G$.
    \par We find that
    \begin{align*}
		[v]_q\left([k+1]_{q}-1\right)^{2}=[v]_q\left(\left([k+1]_{q}-1\right)+\lambda_{1}\left([n]_{q}-1\right)+\lambda_{2}\left([v]_{q}-[n]_{q}\right)\right)\:,
	\end{align*}
    from which the statement immediately follows.
\end{proof}

We first look at a particular kind of $q$-DDG's.

\begin{lemma}\label{lem:lambda=mu}
	If $\E$ is a q-DDG with parameters $(v,k,\lambda_{1},\lambda_{2},v;q)$, then $\E$ is trivial, namely complete or empty.
\end{lemma}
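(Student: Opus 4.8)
The plan is to reduce the statement to the classification of strongly regular $q$-ary graphs in Result \ref{th:srgclass}, and then eliminate every possibility except the complete and empty graphs. First I would note that when $n=v$ the spread $\mathcal{G}$ can only be $\{\F_{q}^{v}\}$, since $\F_{q}^{v}$ is its own unique $v$-dimensional subspace. Hence for every pair of distinct vertices $X,Y$ we have $X,Y\subseteq\F_{q}^{v}\in\mathcal{G}$, so the defining condition of a $q$-DDG always falls into the first case and $|(N_{\E}(X)\cap N_{\E}(Y))\setminus\{X,Y\}|=\lambda_{1}$ regardless of adjacency. Thus $\E$ is a strongly regular $q$-ary graph with $\lambda=\mu=\lambda_{1}$, as already recorded in the remark preceding the lemma.

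Next I would apply Result \ref{th:srgclass}. Its second family has parameters $(v,v-2,\mu-2,\mu;q)$ and therefore satisfies $\lambda=\mu-2\neq\mu$, which is incompatible with $\lambda=\mu$; this case is excluded. Consequently $\E$ must lie in the first family, i.e.\ it is the disjoint union of complete $q$-graphs on the elements of a $(k+1)$-spread.

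The remaining and, I expect, decisive step is to show that within this family the constraint $\lambda=\mu$ leaves only the two trivial graphs. Reading off the intersection numbers directly from the structure: for two distinct vertices in the same component, a complete $q$-graph on a $(k+1)$-space $U$, both neighborhoods equal $U$, so after deleting the two vertices they have $[k+1]_{q}-2$ common neighbors, giving $\lambda=[k+1]_{q}-2$; for two vertices in different spread elements the neighborhoods meet only in the zero vector, giving $\mu=0$. Forcing $\lambda=\mu$ would then require $[k+1]_{q}=2$, which is impossible for a prime power $q$ once $k\geq 1$. The only ways out are the degenerate configurations: a single spread element, where $k+1=v$ and $\E$ is complete, or singleton components, where $k=0$ and $\E$ is empty. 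Any intermediate $1\leq k\leq v-2$ yields genuine pairs of both types with $[k+1]_{q}-2>0=\mu$, a contradiction. The main obstacle is precisely this bookkeeping of the degenerate subcases: in each of them one of $\lambda,\mu$ is vacuous, so one must argue separately that only adjacent pairs occur (forcing completeness) or only non-adjacent pairs occur (forcing emptiness), rather than deriving the impossible identity $[k+1]_{q}=2$.
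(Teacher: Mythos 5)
Your proposal is correct and follows essentially the same route as the paper: reduce the $n=v$ case to a strongly regular $q$-ary graph with $\lambda=\mu=\lambda_{1}$ and then invoke Result~\ref{th:srgclass}. The paper's own proof is just a two-line appeal to that classification ``given the parameters of these graphs''; the case analysis you carry out explicitly (excluding the symplectic family via $\lambda=\mu-2\neq\mu$, and forcing $[k+1]_{q}=2$ unless the spread is a single element or $k=0$) is exactly the bookkeeping the paper leaves implicit.
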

\begin{proof}
	It immediately follows that $\E\in \SRG(v,k,\lambda_{1},\lambda_{1};q)$. From Theorem \ref{th:srgclass} the statement follows (given the parameters of these graphs).	
\end{proof}

Note that $\lambda_{2}$ is actually not defined in the case described by the previous lemma.

\begin{lemma}\label{lem:ddgnumbers1}
	If $\E\in \DDG(v,k,\lambda_{1},\lambda_{2},n;q)$, then
	\begin{itemize}
		\item any two vertices in the same spread element are adjacent, or%for any vertex $S_{\Gamma}(x)\subseteq N_{\Gamma}(x)$
		\item no two vertices in the same spread element are adjacent, or%$S_{\Gamma}(x)\cap N_{\Gamma}(x)=\{x\}$
		\item $\lambda_{1}=1$ and $q=2$.
	\end{itemize}
\end{lemma}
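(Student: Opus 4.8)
The plan is to convert the combinatorial hypothesis on common neighbours into a statement about dimensions of intersections of neighbourhood subspaces, and then to use how rigid the quantity $[\cdot]_{q}$ is. For a vertex $U$, write $S_{U}$ for the $(k+1)$-space that is its neighbourhood $N_{\E}(U)$, using $k$-regularity. For two distinct vertices $U,U'$ the common neighbours other than $U,U'$ are exactly the vector lines contained in $S_{U}\cap S_{U'}$ other than $U$ and $U'$ themselves. The crucial bookkeeping point is whether $U,U'$ lie in this intersection: if $U$ and $U'$ are adjacent then both lie in $S_{U}\cap S_{U'}$, so the number of common neighbours equals $[\dim(S_{U}\cap S_{U'})]_{q}-2$; if $U$ and $U'$ are non-adjacent then neither lies in $S_{U}\cap S_{U'}$, so the number of common neighbours equals $[\dim(S_{U}\cap S_{U'})]_{q}$.

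Next I would argue by contradiction, assuming that the first two alternatives both fail. Then there is a pair $X,Y$ of adjacent vertices lying in a common spread element, and a pair $X',Z'$ of non-adjacent vertices lying in a common spread element. (If $n=1$ there are no same-element pairs and the second alternative holds vacuously, so we may assume $n\geq 2$.) By the definition of a $q$-DDG both pairs have exactly $\lambda_{1}$ common neighbours. Setting $a=\dim(S_{X}\cap S_{Y})$ and $b=\dim(S_{X'}\cap S_{Z'})$, the two cases above give $[a]_{q}=\lambda_{1}+2$ and $[b]_{q}=\lambda_{1}$, and subtracting yields the single clean equation $[a]_{q}-[b]_{q}=2$.

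Finally I would solve this equation over the admissible integers. Since $[\cdot]_{q}$ is strictly increasing we have $a>b\geq 0$, and the factorisation $[a]_{q}-[b]_{q}=q^{b}[a-b]_{q}$ turns it into $q^{b}[a-b]_{q}=2$. Because $[a-b]_{q}\geq q+1\geq 3$ as soon as $a-b\geq 2$, we are forced into $a-b=1$ and $q^{b}=2$, hence $q=2$ and $b=1$; then $\lambda_{1}=[b]_{q}=[1]_{2}=1$, which is exactly the third alternative.

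I expect the only genuinely delicate point to be the $\pm 2$ bookkeeping in the first step, namely correctly accounting for whether the two chosen vertices themselves lie in $S_{U}\cap S_{U'}$. This accounting differs between the adjacent and the non-adjacent case and is precisely what makes the two neighbour counts, though both equal to $\lambda_{1}$, correspond to two values of $[\cdot]_{q}$ differing by $2$ rather than by $0$. Once that is pinned down, the observation that two values of $[\cdot]_{q}$ can differ by $2$ only in the degenerate configuration $q=2$ is short and essentially forced.
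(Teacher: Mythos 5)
Your proof is correct and follows essentially the same route as the paper: both arguments reduce the hypothesis to an equation $[a]_{q}-[b]_{q}=2$ between the sizes of the two neighbourhood intersections and solve it via the factorisation $q^{b}[a-b]_{q}=2$, forcing $q=2$ and $\lambda_{1}=1$. The one point worth noting is that your $\pm 2$ bookkeeping is the right way around (the \emph{adjacent} pair is the one whose intersection contains both vertices and hence contributes the ``$-2$''), whereas the paper's write-up attributes the two corrections to the opposite pairs --- a harmless slip there, since the resulting equation and conclusion are unchanged.
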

\begin{proof}
	We assume neither the first nor the second possibility in the statement of the lemma is fulfilled. So, there are non-adjacent vertices $X,X'$, with $X'\neq X$, belonging to the same spread element. Also there are adjacent vertices $Y,Y'$ belonging to the same spread element.
    \par Since $N_{\E}(X)$ and $N_{\E}(X')$ are subspaces, also $N_{\E}(X)\cap N_{\E}(X')$ is a subspace. Note that $N_{\E}(X)\cap N_{\E}(X')$ contains the 2-subspace $\langle X,X'\rangle$. It follows that $\lambda_{1}=\frac{q^{a}-1}{q-1}-2$ for some integer $a\geq 2$. Since $N_{\E}(Y)$ and $N_{\E}(Y')$ are subspaces, also $N_{\E}(Y)\cap N_{\E}(Y')$ is a subspace. Note that $N_{\E}(Y)\cap N_{\E}(Y')$ contains neither $Y$ nor $Y'$. It follows that $\lambda_{1}=\frac{q^{b}-1}{q-1}$ for some integer $b\geq 0$. We find that
	\begin{align*}
	 	\frac{q^{a}-1}{q-1}-2=\frac{q^{b}-1}{q-1} \quad\Leftrightarrow\quad q^{a}-q^{b}=2(q-1),
	\end{align*}
	which implies that $a>b$. We rewrite the previous expression once more and find that
	\begin{align*}
	q^{b}\frac{q^{a-b}-1}{q-1}=2.
	\end{align*}
	It follows that $a=2$, $b=1$ and $q=2$. Hence, $\lambda_{1}=1$.
\end{proof}

Similarly we can also prove.

\begin{lemma}\label{lem:ddgnumbers2}
	If $\E\in\DDG(v,k,\lambda_{1},\lambda_{2},n;q)$, then
	\begin{itemize}
		\item any two vertices in a different spread element are adjacent, or
		\item no two vertices in a different spread element are adjacent, or
		\item $\lambda_{2}=1$ and $q=2$.
	\end{itemize}
\end{lemma}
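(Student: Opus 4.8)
The plan is to follow the proof of Lemma~\ref{lem:ddgnumbers1} essentially line by line, but now applied to pairs of vertices lying in \emph{different} spread elements, so that the quantity being controlled is $\lambda_2$ instead of $\lambda_1$. First I would assume that neither of the first two alternatives holds; then there exist non-adjacent vertices $X,X'$ in different spread elements and adjacent vertices $Y,Y'$ in different spread elements, and for both pairs the relevant number of common neighbours (excluding the two vertices) equals $\lambda_2$.

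Next I would exploit that every neighbourhood $N_\E(Z)$ is a subspace, so that each intersection $N_\E(Z)\cap N_\E(Z')$ is again a subspace and therefore contains exactly $\frac{q^{c}-1}{q-1}$ vertices for some integer $c\geq 0$. The two pairs are then distinguished by whether the two vertices themselves lie in this intersection. For the adjacent pair $Y,Y'$ the $2$-subspace $\langle Y,Y'\rangle$ is contained in both $N_\E(Y)$ and $N_\E(Y')$, hence in their intersection, and $Y,Y'$ are among the counted vertices; this yields $\lambda_2=\frac{q^{a}-1}{q-1}-2$ for some $a\geq 2$. For the non-adjacent pair $X,X'$ neither vertex lies in $N_\E(X)\cap N_\E(X')$, so no correction is needed and $\lambda_2=\frac{q^{b}-1}{q-1}$ for some $b\geq 0$.

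Equating the two expressions gives $q^{a}-q^{b}=2(q-1)$, whence $a>b$ and $q^{b}\,\frac{q^{a-b}-1}{q-1}=2$. This is the identical divisibility equation already solved in Lemma~\ref{lem:ddgnumbers1}, and its only solution is $a=2$, $b=1$, $q=2$, so that $\lambda_2=1$. This is precisely the third alternative, completing the proof.

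The step I expect to be the only real subtlety is the bookkeeping in the second paragraph: one must correctly decide, separately for the adjacent and the non-adjacent pair, whether the two vertices belong to the intersection of their neighbourhoods, since that is exactly what makes one of the two counts carry the $-2$ and so produces the equation $q^{a}-q^{b}=2(q-1)$. Everything after that reduces to the elementary computation already used for $\lambda_1$.
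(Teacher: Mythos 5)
Your proof is correct and is exactly the argument the paper intends: the paper omits the proof of this lemma with the remark that it follows ``similarly'' to Lemma~\ref{lem:ddgnumbers1}, and your adaptation to pairs of vertices in different spread elements reproduces that argument faithfully. In fact your bookkeeping is the logically correct one --- the adjacent pair $Y,Y'$ carries the $-2$ because $\langle Y,Y'\rangle\subseteq N_{\E}(Y)\cap N_{\E}(Y')$, while the non-adjacent pair contributes no correction --- whereas the paper's template proof of Lemma~\ref{lem:ddgnumbers1} writes these two cases the other way around, a harmless slip since the resulting equation $q^{a}-q^{b}=2(q-1)$ is identical either way.
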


\begin{thm}
	If $\E$ is a q-DDG with parameters $(v,k,\lambda_{1},\lambda_{2},n;q)$ with $2\leq n<v$, then $\E$ is trivial.	
\end{thm}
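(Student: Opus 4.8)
The plan is to run a case analysis driven by Lemmas~\ref{lem:ddgnumbers1} and~\ref{lem:ddgnumbers2}: the behaviour of pairs \emph{inside} a common spread element, and the behaviour of pairs lying in \emph{different} spread elements, is each of three types --- all such pairs adjacent, no such pair adjacent, or the exceptional situation $q=2$ with $\lambda_1=1$ (resp.\ $\lambda_2=1$). My strategy is to settle everything once the \emph{between}-elements behaviour is uniform, and then to treat the genuinely mixed between-elements case as the real obstacle. First I would handle the case where any two vertices in different spread elements are adjacent: for a vertex $X$ contained in a spread element $G$, the neighbourhood $N_{\E}(X)$ then contains every vertex outside $G$, and since $n<v$ these vertices already span $\F_q^{v}$. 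Hence $N_{\E}(X)=\F_q^{v}$, forcing $k=v-1$, so $\E$ is complete and therefore trivial.

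Next I would handle the opposite uniform case, where no two vertices in different spread elements are adjacent. Then every neighbour of $X$ lies in its own spread element (a neighbour outside $G$ would be a between-elements adjacency), so $N_{\E}(X)\subseteq G$ and the restriction $\E[G]$ is a $k$-regular $q$-ary graph on $G\cong\F_q^{n}$. Moreover any common neighbour of two vertices of $G$ is itself forced to lie in $G$, so every two distinct vertices of $G$ have exactly $\lambda_1$ common neighbours, giving $\E[G]\in\SRG(n,k,\lambda_1,\lambda_1;q)$. By Result~\ref{th:srgclass} such a graph is either a disjoint union of complete graphs on a spread of $G$ or arises from a symplectic polarity; the symplectic graphs satisfy $\lambda=\mu-2\neq\mu$ and are excluded, while a disjoint union of complete graphs has $\mu=0$ and $\lambda=[k+1]_q-2$, so $\lambda=\mu$ forces $\E[G]$ to be complete or empty. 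As $[n]_q-2\neq0$ the same alternative holds for every $G$, whence $\E$ is either the disjoint union of complete graphs on the spread or the empty graph --- trivial. Note that these two steps simultaneously dispose of the mismatched uniform combination (no inside edges but all outside edges), since the all-outside behaviour already forces $\E$ complete.

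What remains is the case in which the between-elements behaviour is genuinely mixed, so that by Lemma~\ref{lem:ddgnumbers2} we have $q=2$ and $\lambda_2=1$, with $\E$ not trivial. Excluding this configuration is the hard part. The structure here is extremely rigid: for adjacent vertices $X,Y$ in distinct spread elements, $N_{\E}(X)\cap N_{\E}(Y)$ is a subspace containing the edge $\langle X,Y\rangle$ together with exactly $\lambda_2=1$ further point, hence equals $\langle X,Y\rangle$; its third point $Z$ cannot lie in the spread element of $X$ or of $Y$, so each between-elements edge is a transversal line meeting three distinct spread elements. Combined with regularity this pins the incidences within each neighbourhood into a friendship/windmill pattern (the between-elements neighbours of $X$ are mutually adjacent only along lines through $X$).

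I expect to turn this rigidity into a contradiction in one of two ways, and this is where the main difficulty lies. The first tactic is to pass to the quotient over $\F_{q^{n}}$ supplied by the spread via field reduction (the André--Bruck--Bose correspondence), show that the between-elements incidences descend to a strongly regular $q^{n}$-ary graph with $\lambda=\mu=1$, and invoke Result~\ref{th:srgclass} once more, which forbids equal parameters $\lambda=\mu$ for a proper graph. The second tactic is more computational: combine the counting identity of Theorem~\ref{th:ddgstandardequation} with the fact that, $N_{\E}(X)\cap N_{\E}(Y)$ being a subspace, each of $\lambda_1,\lambda_2$ must have the form $[c]_q$ (for non-adjacent pairs) or $[c]_q-2$ (for adjacent pairs), and extract a divisibility or size contradiction for $2\le n<v$. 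The delicate obstacle is making the descent of the first tactic rigorous --- verifying that adjacency is well defined on spread elements and that the induced parameters really are $\lambda=\mu$ --- which is exactly the step that distinguishes the genuinely mixed between-elements case from the uniform ones already settled.
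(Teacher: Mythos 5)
Your treatment of the two uniform cases is correct and essentially coincides with the paper's: when all pairs from different spread elements are adjacent, each neighborhood contains all vertices outside one spread element and hence spans $\F_q^{v}$, giving the complete graph; when no such pair is adjacent, the graph decomposes over the spread and each piece is an $\SRG(n,k,\lambda_1,\lambda_1;q)$, which Result~\ref{th:srgclass} forces to be complete or empty (this is the paper's Lemma~\ref{lem:lambda=mu}). The problem is the third case, $q=2$ and $\lambda_2=1$, which you correctly identify as the crux but do not actually prove. You offer two candidate tactics and carry out neither. The first (descending to a $q^{n}$-ary graph via field reduction over $\F_{q^{n}}$) is unlikely to work as stated: for the quotient to be a $q^{n}$-ary graph in the sense of Definition~\ref{def:qgraph} you would need the neighborhoods, or at least the between-element incidence structure, to be $\F_{q^{n}}$-linear, and nothing in the hypotheses provides that. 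The transversal-line rigidity you describe is true but by itself yields no contradiction --- indeed the analogous rigidity is exactly what is realized by the genuine $q$-Deza graphs with parameters $(6,2,1,0;2)$ in Section~3, so any argument must use the divisibility constraints quantitatively, not just the local geometry.

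Your second tactic is the route the paper actually takes, and it is where all the work lies. Concretely: Lemma~\ref{lem:ddgnumbers1} splits $\lambda_1$ into the two shapes $2^{a}-3$ (all within-element pairs adjacent) or $2^{a}-1$ (none adjacent, absorbing the $\lambda_1=1$ subcase), each of these is substituted into the identity of Theorem~\ref{th:ddgstandardequation}, and the resulting equation
\[
2^{2k+1}-2^{k+2}-2^{k}+3=\lambda_{1}\left(2^{n-1}-1\right)+2^{v-1}-2^{n-1}
\]
is analyzed by comparing $2$-adic valuations of the surviving terms, with further subcases on the relative sizes of $a$, $n$, $k$ and $v$ (e.g.\ $a=n$, $a=n+1$, $a\geq n+2$), each ending in a contradiction such as $(v,k)=(5,2)$ with $n\nmid v$. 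None of this is routine enough to be waved at as ``extract a divisibility or size contradiction''; as written, your proposal establishes the theorem only outside the exceptional case and leaves the genuinely hard portion unproved.
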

\begin{proof}
	By Lemma \ref{lem:ddgnumbers2} we can distinguish between three cases. If any two vertices in a different spread element are adjacent, then the neighborhood of each vertex must equal $\F^{v}_{q}$, since the whole space is the only subspace containing all spread elements but one. So, $\E$ is the complete graph.
	\par If no two vertices in a different spread element are adjacent, $\E$ is the disjoint union of $q$-graphs $\E_{1},\dots,\E_{m}$, with $m=\frac{q^{v}-1}{q^{n}-1}$, each defined on an element of the spread. Each of the $\E_{i}$ is a $q$-graph fulfilling the conditions of Lemma \ref{lem:lambda=mu}, so is either complete or empty. However, if one of them is the empty graph, then $\lambda_{1}=0$, so they all must be the empty graph. Hence, either $\E$ is the empty graph, or the disjoint union of complete graphs on the elements of the spread.
	\par Finally we look at the case with $\lambda_{2}=1$ and $q=2$. From Theorem \ref{th:ddgstandardequation} we find that
	\begin{align}\label{eq:geval3}
		&\left(2^{k+1}-2\right)^{2}=\left(2^{k+1}-2\right)+\lambda_{1}\left(2^{n}-2\right)+\left(2^{v}-2^{n}\right)\nonumber\\
		\Leftrightarrow\quad&2^{2k+1}-2^{k+2}-2^{k}+3=\lambda_{1}\left(2^{n-1}-1\right)+2^{v-1}-2^{n-1}\;.
	\end{align}
	We now distinguish two cases based on the results of Lemma \ref{lem:ddgnumbers1}, combining the second and third case of this lemma. We can assume without loss of generality that $v\geq k+2$ since for $v=k+1$ we get the complete graph, and also that $k\geq1$ since for $k=0$ we get the empty graph.
	\begin{itemize}
		\item If any two vertices in the same spread element are adjacent, then $\lambda_{1}=(2^{a}-1)-2$ for some integer $a$ with $n\leq a\leq k+1$, since $N_{\E}(X)\cap N_{\E}(X')$ is a subspace containing both $X$ and $X'$, for any vertices $X,X'$ in the same spread element. Moreover, if $k+1=n$, then two vertices from different spread elements cannot be adjacent, contradicting $\lambda_{2}=1$. So $k\geq n$. Substituting $\lambda_{1}=2^{a}-3$ in Equation \ref{eq:geval3} we find
		\begin{align}\label{eq:geval3.1}
			2^{2k+1}-2^{k+2}-2^{k}=2^{v-1}+2^{a+n-1}-2^{a}-2^{n+1}\;.
		\end{align}
		We distinguish between three subcases.
		\begin{itemize}
			\item If $a=n$ then Equation \eqref{eq:geval3.1} reduces to
			\begin{align*}
				2^{2k+1}-2^{k+2}-2^{k}=2^{v-1}+2^{2n-1}-2^{n}-2^{n+1}\;.
			\end{align*}
			From reducing modulo $2^{n+1}$ it follows that $k=n$, and thus that $2^{2k}+2^{2k-1}=2^{v-1}+2^{k+1}$. We find that $(v,k)=(5,2)$. However, then $n\nmid v$, a contradiction.
			\item If $a=n+1$ then Equation \eqref{eq:geval3.1} reduces to
			\begin{align*}
				2^{2k+1}-2^{k+2}-2^{k}=2^{v-1}+2^{2n}-2^{n+2}\;.
			\end{align*}
			Recall that $n\geq2$, and note that for $n=2$ we immediately have a contradiction. So, $n\geq3$. It follows that $k=n+2$ or $v-1=n+2$, since $v\geq k+2$. If $k=n+2$, then $2^{2k+1}-2^{2k-4}-2^{k+2}=2^{v-1}$, a contradiction. If $v-1=n+2$, then $2^{2k+1}-2^{k+2}-2^{k}=2^{2n}$, also a contradiction.
			\item If $a\geq n+2$, then reducing Equation \eqref{eq:geval3.1} modulo $2^{a}$ learns that $k=n+1$; recall that $v-1\geq k+1\geq a$. Equation \eqref{eq:geval3.1} reduces to
			\begin{align*}
				2^{2k+1}-2^{k+2}=2^{v-1}+2^{a+k-2}-2^{a}\;.
			\end{align*}
			It follows that $2^{a+k-2}-2^{a}\equiv0\pmod{2^{k+1}}$. Since $a\geq n+2$ we have that $a\geq4$, so $k\geq3$, hence $a+k-2>a$. Therefore it follows from $2^{a+k-2}-2^{a}\equiv0\pmod{2^{k+1}}$ that $a=k+1$. We find that $2^{2k}+2^{2k-1}=2^{v-1}+2^{k+1}$. Exactly as in the first case we find a contradiction.
		\end{itemize}
		\item If no two vertices in the same spread element are adjacent, then $\lambda_{1}=2^{a}-1$ for some integer $a$ with $a\geq 0$, since $N_{E}(X)\cap N_{E}(X')$ is a subspace, for any vertices $X,X'$. If $\lambda_{1}=1$, then $\lambda_{1}=2^{a}-1$ for $a=1$. %If $\lambda_{1}=1$, we may assume that $1=a\leq k$, since $k=0$ corresponds to the empty graph. If no two vertices in the same spread element are adjacent we have that $a\leq k$ since $N_{E}(x)\cap N_{E}(x')$ is a subspace containing neither $x$ nor $x'$, for any vertices $x,x'$ in the same spread element.% Moreover, if $k+1=n$, then two vertices from different spread elements cannot be adjacent, contradicting $\lambda_{2}=1$. So $k+1>n$.
		Substituting $\lambda_{1}=2^{a}-1$ in Equation \ref{eq:geval3} we find
		\begin{align}\label{eq:geval3.2}
			2^{2k+1}-2^{k+2}-2^{k}+2=2^{v-1}+2^{a+n-1}-2^{a}-2^{n}\;.
		\end{align}
		Since $n\geq2$ and $v-1\geq k+1\geq2$, we must have that $a=1$ or $k=1$. We distinguish between these two subcases.
		\begin{itemize}
			\item If $a=1$, then Equation \eqref{eq:geval3.2} reduces to
			\begin{align*}
				2^{2k+1}-2^{k+2}-2^{k}+4=2^{v-1}\;.
			\end{align*}
			Recall that $v$ is a non-trivial multiple of $n$, so $v\geq4$. Hence, reducing the above expression modulo 8, learns that $k=2$. Then, $v=5$ a prime. However $n$ is a nontrivial divisor of $v$, a contradiction.
			\item If $k=1$, then Equation \eqref{eq:geval3.2} reduces to
			\begin{align*}
				0=2^{v-1}+2^{a+n-1}-2^{a}-2^{n}\;,
			\end{align*}
			a contradiction since $2^{n}\leq 2^{v-1}$ and $2^{a}<2^{a+n-1}$.
		\end{itemize}
	\end{itemize}
	We conclude that in each of the cases we find no examples, or only the trivial examples.
\end{proof}

\section{\texorpdfstring{$q$}{q}-Analogs of Deza graphs}\label{sec:deza}

In the following definition of $q$-analogs of Deza graphs, we generalize the notion of $q$-analogs of strongly regular graphs in a similar way as Deza graphs generalize strongly regular graphs.

\begin{defi}\label{qDeza}
A $k$-regular $q$-ary graph $\E\in\RG(v,k;q)$ is a \emph{$q$-ary Deza graph} (shortly \emph{$q$-Deza graph}) with parameters $(v,k,b,a;q)$ if for all vertices $X,Y$ with $X\neq Y$, it holds that $|(N_{\E}(X)\cap N_{\E}(Y))\setminus\{X,Y\}| \in \{ a,b \}$, where $a \le b$.
\end{defi}

\comments{
\begin{example}
	The following are trivial examples of qDDG's.
	\begin{enumerate}
		\item For any $n$-spread the complete graph is a qDDG with parameters $(v,v-1,\lambda,\lambda,n;q)$, with $\lambda=\frac{q^{v}-1}{q-1}-2$ and $n\mid v$.
		\item For any $n$-spread $S$, the disjoint union of the complete graphs on the elements of $S$ is a qDDG with parameters $(v,n-1,\lambda,0,n;q)$, with $\lambda=\frac{q^{n}-1}{q-1}-2$ and $n\mid v$.
		\item For any $n$-spread the empty graph is a qDDG with parameters $(v,0,0,0,n;q)$, with $n\mid v$.
	\end{enumerate}
\end{example}}

We now prove an analog for Lemma \ref{lem:dezacounting}.

\begin{thm}\label{th:dezastandardequation}
	Let $\E=(V,E)$ be a q-Deza graph with parameters $(v,k,\lambda_{1},\lambda_{2};q)$, with $\lambda_{1}\neq\lambda_{2}$. For any vertex $X$ we have
	\begin{align*}
		\left|\left\{Y\in V\mid |N_{\E}(X)\cap N_{\E}(Y)|=\lambda_{1}\right\}\right|=q\frac{\left(q^{k+1}-2q+1\right)\left(q^{k}-1\right)-\lambda_{2}\left(q^{v-1}-1\right)(q-1)}{(\lambda_{1}-\lambda_{2})(q-1)^2}\;.
	\end{align*}
	In this statement, we can interchange the roles of $\lambda_{1}$ and $\lambda_{2}$. Note that the result is independent of the vertex $X$.
\end{thm}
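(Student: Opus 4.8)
The plan is to set up two linear equations in the two unknowns
\[
p_{1}=\left|\left\{Y\in V\mid Y\neq X,\ |(N_{\E}(X)\cap N_{\E}(Y))\setminus\{X,Y\}|=\lambda_{1}\right\}\right|
\]
and the analogous count $p_{2}$ with $\lambda_{2}$ in place of $\lambda_{1}$, exactly in the spirit of Lemma~\ref{lem:dezacounting} and of the double count carried out in Theorem~\ref{th:ddgstandardequation}. The first equation is simply the partition of the vertices other than $X$ into the two classes: the defining property of a $q$-Deza graph forces every $Y\neq X$ into one of them, and there are $[v]_{q}$ vertices in total, so $p_{1}+p_{2}=[v]_{q}-1$.

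For the second equation I would double count the pairs $(Y,Z)$ of vertices with $Y\neq X$ and $Z\in (N_{\E}(X)\cap N_{\E}(Y))\setminus\{X,Y\}$. Summing over $Y$ and invoking the definition of a $q$-Deza graph, this number equals $\lambda_{1}p_{1}+\lambda_{2}p_{2}$. On the other hand, I would count by first fixing $Z$: regularity gives $[k+1]_{q}-1$ choices for $Z\in N_{\E}(X)\setminus\{X\}$, and for each such $Z$ the admissible $Y$ are the vertices of the $(k+1)$-space $N_{\E}(Z)$ distinct from $Z$ and from $X$. Here one uses that adjacency is symmetric, so that $Z\in N_{\E}(X)\setminus\{X\}$ forces $X\in N_{\E}(Z)\setminus\{Z\}$; hence both $X$ and $Z$ must be removed, leaving $[k+1]_{q}-2$ choices for $Y$. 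This yields $\lambda_{1}p_{1}+\lambda_{2}p_{2}=([k+1]_{q}-1)([k+1]_{q}-2)$.

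Solving the $2\times 2$ system (subtract $\lambda_{2}$ times the first equation from the second and divide by $\lambda_{1}-\lambda_{2}$, nonzero by hypothesis) gives
\[
p_{1}=\frac{([k+1]_{q}-1)([k+1]_{q}-2)-\lambda_{2}([v]_{q}-1)}{\lambda_{1}-\lambda_{2}}.
\]
Finally I would pass to closed form using $[k+1]_{q}-1=\frac{q(q^{k}-1)}{q-1}$, $[k+1]_{q}-2=\frac{q^{k+1}-2q+1}{q-1}$ and $[v]_{q}-1=\frac{q(q^{v-1}-1)}{q-1}$; substituting and pulling out the common factor $\frac{q}{(q-1)^{2}}$ reproduces the stated expression. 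Interchanging $\lambda_{1}$ and $\lambda_{2}$ (equivalently, solving the same system for $p_{2}$) gives the second assertion, and since neither equation mentions $X$, the count is independent of the chosen vertex.

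The argument is routine once the two equations are in place; the only point requiring genuine care is the exclusion bookkeeping in the double count, namely verifying that for each fixed $Z\in N_{\E}(X)\setminus\{X\}$ one must discard from $N_{\E}(Z)$ \emph{exactly} the two vertices $Z$ and $X$. That $X$ indeed lies in $N_{\E}(Z)$ rests on the symmetry of adjacency in a $q$-ary graph, and this is precisely the step where an off-by-one slip would propagate into a wrong constant in the final formula.
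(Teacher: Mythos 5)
Your proof is correct and takes essentially the same route as the paper's: the authors likewise combine the partition identity $n_{1}+n_{2}=[v]_{q}-1$ with a double count of incident triples yielding $\lambda_{1}n_{1}+\lambda_{2}n_{2}=([k+1]_{q}-1)([k+1]_{q}-2)$, solve the resulting $2\times 2$ system, and simplify to the stated closed form. Your careful bookkeeping of the exclusions (removing exactly $X$ and $Z$ from $N_{\E}(Z)$, using symmetry of adjacency) is exactly the point the paper's count also relies on, and it is handled correctly.
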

\begin{proof}
	We denote $|\{Y\in V\mid |N_{\E}(X)\cap N_{\E}(Y)|=\lambda_{i}\}|$ by $n_{i}$. It follows immediately from the definition of a $q$-ary Deza graph that
	\begin{align}\label{eq:Deza1}
		n_{1}+n_{2}=[v]_{q}-1\;.%=q\frac{q^{v-1}-1}{q-1}
	\end{align}
	Counting the triples $(Y_{1},Z,Y_{2})$ with $Y_{1},Y_{2},Z$ vertices of $\Gamma$ and $Z\in N_{\Gamma}(Y_{1})\cap N_{\Gamma}(Y_{2})$ and $Y_{1}\neq Z\neq Y_{2}$ in two ways, we find that
	\begin{align}\label{eq:Deza2}
		\left([k+1]_{q}-1\right)^{2}=\left([k+1]_{q}-1\right)+\lambda_{1}n_{1}+\lambda_{2}n_{2}.
	\end{align}
	Solving the system of Equations \eqref{eq:Deza1} and \eqref{eq:Deza2} for $n_{1}$ and $n_{2}$ gives
	\begin{align*}
		n_{1}&=\frac{([k+1]_{q}-1)([k+1]_{q}-2)-\lambda_{2}([v]_{q}-1)}{\lambda_{1}-\lambda_{2}}\\
        &=q\frac{(q^{k+1}-2q+1)(q^{k}-1)-\lambda_{2}(q^{v-1}-1)(q-1)}{(\lambda_{1}-\lambda_{2})(q-1)^2}\\
		n_{2}&=\frac{([k+1]_{q}-1)([k+1]_{q}-2)-\lambda_{1}([v]_{q}-1)}{\lambda_{2}-\lambda_{1}}\\
        &=q\frac{(q^{k+1}-2q+1)(q^{k}-1)-\lambda_{1}(q^{v-1}-1)(q-1)}{(\lambda_{2}-\lambda_{1})(q-1)^2}
	\end{align*}
	and the statement directly follows. 
\end{proof}

\begin{notation}
	For a $q$-Deza graph $\E=(V,E)$ with parameters $(v,k,\lambda_1,\lambda_2;q)$, we denote $\left|\left\{Y\in V\mid |N_{\E}(X)\cap N_{\E}(Y)|=\lambda_{i}\right\}\right|$, with $X$ an arbitrary vertex, by $n_{\Gamma}(\lambda_{i})$, $i=1,2$. Theorem \ref{th:dezastandardequation} shows that $n_{\Gamma}(\lambda_{i})$ is well-defined as it is indeed independent of the choice for $X$.
\end{notation}

The following theorem can be proven in a similar way as Theorem \ref{th:q-DDG->DDG}.

\begin{thm} \label{th:q-Deza->Deza}
If a $q$-ary Deza graph with parameters $(v,k,b,a;q)$ exist, then a classical Deza graph with parameters $([v]_q,[k+1]-1,b,a)$ exists.
\end{thm}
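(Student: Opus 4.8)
The plan is to reuse verbatim the construction from the proof of Theorem \ref{th:q-DDG->DDG}. Starting from a $q$-ary Deza graph $\E$ with parameters $(v,k,b,a;q)$, I would define a simple graph $\Gamma$ whose vertex set is the set of vertices of $\E$, i.e.~the $[v]_q$ vector lines of $\F_q^v$, and in which two distinct vertices $X,Y$ are declared adjacent exactly when $\langle X,Y\rangle$ is an edge of $\E$ (so each hyperedge is replaced by all the $2$-vertex edges it carries). The goal is then to verify that $\Gamma$ is a classical Deza graph with the claimed parameters $([v]_q,[k+1]_q-1,b,a)$.

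There are only two things to check. First I would establish regularity: since $\E\in\RG(v,k;q)$, the neighborhood $N_{\E}(X)$ of each vertex $X$ is a $(k+1)$-dimensional subspace, which contains exactly $[k+1]_q$ vector lines, one of them being $X$ itself. Hence in $\Gamma$ the vertex $X$ has precisely $[k+1]_q-1$ neighbors, so $\Gamma$ is $([k+1]_q-1)$-regular. Second I would translate the common-neighbor condition: for distinct vertices $X,Y$, a vertex $Z$ is a common neighbor of $X$ and $Y$ in $\Gamma$ if and only if $Z\in N_{\E}(X)$, $Z\in N_{\E}(Y)$ and $Z\notin\{X,Y\}$. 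Therefore the set of common neighbors of $X$ and $Y$ in $\Gamma$ is exactly $(N_{\E}(X)\cap N_{\E}(Y))\setminus\{X,Y\}$, whose cardinality lies in $\{a,b\}$ by Definition \ref{qDeza}. Thus any two distinct vertices of $\Gamma$ have either $a$ or $b$ common neighbors.

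To finish I would confirm that the parameters satisfy the defining inequalities $0\le a<b\le [k+1]_q-1<[v]_q$ of a classical Deza graph: the bound $b\le[k+1]_q-1$ holds because the number of common neighbors cannot exceed the degree, and $[k+1]_q-1<[v]_q$ follows from $k+1\le v$. Putting these observations together shows that $\Gamma$ is a Deza graph with parameters $([v]_q,[k+1]_q-1,b,a)$, as required.

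I do not expect a genuine obstacle here: the argument is essentially identical to that of Theorem \ref{th:q-DDG->DDG}, and the only mildly delicate point is the bookkeeping in the common-neighbor identification, namely that the two excluded vertices $X$ and $Y$ in the $q$-ary definition correspond precisely to the self-exclusions inherent in counting common neighbors of $X$ and $Y$ in the simple graph $\Gamma$. Once that correspondence is stated, the preservation of the parameters $a$ and $b$ is immediate, and no spread structure (as in the divisible case) needs to be carried along.
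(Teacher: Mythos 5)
Your proposal is correct and coincides with the paper's own treatment: the paper gives no separate proof of Theorem~\ref{th:q-Deza->Deza} beyond noting that it "can be proven in a similar way as Theorem~\ref{th:q-DDG->DDG}", and the construction you carry out (replacing each hyperedge by its $2$-vertex edges, counting $[k+1]_q-1$ neighbors, and identifying common neighbors with $(N_{\E}(X)\cap N_{\E}(Y))\setminus\{X,Y\}$) is exactly that argument. Your extra verification of the parameter inequalities is harmless and, if anything, slightly more careful than the paper.
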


It is immediate that every $q$-ary strongly regular graph is a $q$-Deza graph. But unlike the situation for $q$-DDG's, where every $q$-DDG is actually a $q$-ary strongly regular graph, we can find examples of $q$-Deza graphs that are not strongly regular. We first recall the definition of a generalized hexagon, which is a special case of a generalized polygon. For background on generalized polygons we refer to \cite{bookvanmaldeghem}.

\begin{defi}
    A \emph{generalized hexagon} is a point-line geometry $(\mathcal{P},\mathcal{L})$ such that its incidence graph has diameter 6 and girth 12. The incidence graph is the bipartite graph with vertex set $\mathcal{P}\cup\mathcal{L}$ where $p\in\mathcal{P}$ and $\ell\in\mathcal{L}$ are adjacent if $p$ and $\ell$ are incident in the geometry. A generalized hexagon $\mathcal{H}$ has order $(s,t)$ if any line has $s+1$ points and through any point there are $t+1$ lines.
\end{defi}

\begin{example}\label{ex:DezasplitCayleyhexagon}
    There are up to isomorphism two generalized hexagons of order $(2,2)$, namely the split Cayley hexagon $\mathcal{H}(2)$, and its dual (see \cite{cohentits}). In particular, $\mathcal{H}(2)$ has a unique, up to projectivity, {\em regular embedding} in $\PG(5, 2)$, where $\mathcal{P}$ consists of all the points of $\PG(5, 2)$, $\mathcal{L}$ of a set of $63$ lines of $\PG(5, 2)$ such that
    \begin{itemize}
        \item the lines of $\mathcal{L}$ through a point are coplanar;
        \item the points at distance less than $4$ from any given point are contained in a hyperplane of $\PG(5, 2)$.
    \end{itemize}
For further information on $\mathcal{H}(2)$ the reader is referred to \cite{TV} and references therein. The set $\mathcal{L}$ of lines of $\mathcal{H}(2)$ can be interpreted as $q$-Deza graph. Indeed, let $P$ be a point of $\PG(5, 2)$ then there are three lines of $\mathcal{L}$ through $P$ and these lines are coplanar. Moreover, if $R$ is a point of $\PG(5, 2)$, with $P \ne R$, then $|N_{\mathcal{L}}(P) \cap N_{\mathcal{L}}(R)| = 1$, if $d(P, R) \in \{2, 4\}$, whereas $|N_{\mathcal{L}}(P) \cap N_{\mathcal{L}}(R)| = 0$, if $d(P, R) = 6$, where $d$ is the distance in the incidence graph of $\mathcal{H}(2)$. Hence $\mathcal{L}$ is $q$-Deza graph with parameters $(6, 2, 1, 0; 2)$. The stabilizer of $\mathcal{L}$ in $\PGL(6, 2)$ is the Chevalley group $G_2(2)$.
\end{example}

\begin{example}\label{ex:singer}
Let $N$ be the normalizer in $\PGL(6, 2)$ of a Singer group $H$ of $\PG(5, 2)$. Then $N$ is a group of order 378 and contains three conjugacy classes of subgroups of order $63$, one of them the conjugacy class of $H$. With the aid of a computer algebra package Magma (\cite{magma}) it can easily be checked that the subgroups of exactly one of the two remaining classes have a transitive action on points of $\PG(5, 2)$. Let $K$ be such a subgroup. In its action on lines of $\PG(5, 2)$, the group $K$ has 10 orbits of size 63 and one orbit of size 21. Among these 10 line orbits, there are precisely three such that each of them is the edge set of a $q$-Deza graph with parameters $(6, 2, 1, 0; 2)$ that is not strongly regular. These three $q$-Deza graphs are projectively equivalent, since they are permuted in a single orbit by the group $N$. The stabilizer of a $q$-Deza graph arising in this way in $\PGL(6, 2)$ coincides with $K$. For the convenience of the readers, more details are given below. We shall find it helpful to represent the elements of $\PGL(6, 2)$ as matrices in $\GL(6, 2)$. We shall consider the points as row vectors, with matrices acting on the right. Up to conjugacy, we can consider $H=\langle\sigma\rangle$ and $K=\langle\sigma^3,\varphi\rangle$, where
\[
    \sigma=
    \begin{pmatrix}
        0&1&0&0&0&0\\
        0&0&1&0&0&0\\
        0&0&0&1&0&0\\
        0&0&0&0&1&0\\
        0&0&0&0&0&1\\
        1&1&0&1&1&0
    \end{pmatrix}\quad\text{ and }
    \quad\varphi=
    \begin{pmatrix}
        0&0&0&1&0&1\\
        1&1&0&0&0&0\\
        0&0&0&0&1&1\\
        0&1&0&0&0&0\\
        0&0&0&0&0&1\\
        1&0&1&0&1&1
    \end{pmatrix}
\]
Then, a set of representatives of the three line orbits that give rise to a $q$-Deza graph is given by
\begin{align*}
    \left\{ \langle(1,0,0,0,0,0),(0,1,1,0,0,0)\rangle, \langle(1,0,0,0,0,0),(0,0,1,1,0,0) \rangle, \right. \\
    \left. \langle (0,1,0,0,0,0),(0,0,1,1,0,0)\rangle \right\}.
\end{align*}
%\begin{align*}
%    \ell_{1}&=\langle(1,0,0,0,0,0),(0,1,1,0,0,0)\rangle,\\
%    \ell_{2}&=\langle(1,0,0,0,0,0),(0,0,1,1,0,0)\rangle,\\
%    \ell_{3}&=\langle(0,1,0,0,0,0),(0,0,1,1,0,0)\rangle.
%\end{align*}
\end{example}

\begin{lemma}\label{lem:dezanumbers}
	Let $\E$ be a $q$-Deza graph with parameters $(v,k,\lambda_{1},\lambda_{2};q)$. For any $\lambda\in\{\lambda_{1},\lambda_{2}\}$ then
	\begin{itemize}
		\item any two vertices that have $\lambda$ common neighbors are adjacent, or
		\item any two vertices that have $\lambda$ common neighbors are not adjacent, or
		\item $\lambda=1$ and $q=2$.
	\end{itemize}
\end{lemma}
\begin{proof}
	We assume neither the first nor the second possibility in the statement of the lemma is fulfilled. So, there are non-adjacent vertices $X,X'$, with $X'\neq X$, having $\lambda$ common neighbors, and there are adjacent vertices $Y,Y'$ having $\lambda$ common neighbors.
	\par Since $N_{\E}(X)$ and $N_{\E}(X')$ are subspaces, also $N_{\E}(X)\cap N_{\E}(X')$ is a subspace. Note that $N_{\E}(X)\cap N_{\E}(X')$ contains the 2-subspace $\langle X,X'\rangle$. It follows that $\lambda=\frac{q^{a}-1}{q-1}-2$ for some integer $a\geq2$. Since $N_{\E}(Y)$ and $N_{\E}(Y')$ are subspaces, also $N_{\E}(Y)\cap N_{\E}(Y')$ is a subspace. Note that $N_{\E}(Y)\cap N_{\E}(Y')$ contains neither $Y$ nor $Y'$. It follows that $\lambda_{1}=\frac{q^{b}-1}{q-1}$ for some integer $b\geq0$. We find that
	\begin{align*}
	\frac{q^{a}-1}{q-1}-2=\frac{q^{b}-1}{q-1} \quad\Leftrightarrow\quad q^{a}-q^{b}=2(q-1),
	\end{align*}
	which implies that $a>b$. We rewrite the previous expression once more and find that
	\begin{align*}
	q^{b}\frac{q^{a-b}-1}{q-1}=2.
	\end{align*}
	It follows that $a=2$, $b=1$ and $q=2$. Hence, $\lambda=1$.
\end{proof}

\begin{thm}\label{th:dezaclassification}
	If $\E$ be a q-Deza graph with parameters $(v,k,\lambda_{1},\lambda_{2};q)$, then
	\begin{itemize}
		\item $\E$ is strongly regular, or
		\item $k$ is even, and $(v,k,\lambda_{1},\lambda_{2};q)=(2k+1,k,2^{a}-1,1;2)$ for some integer $a$ with $2 \leq a\leq k-1$, where $(a-1) \mid k$ or $(a-1) \mid (k-2)$, or
		\item $vk$ is even, and $(v,k,\lambda_{1},\lambda_{2};q)=(v,k,1,0;2)$ with $v\geq 2k+2$ and $k\geq2$.
	\end{itemize}
    In the second and third case any two adjacent vertices have precisely one common neighbor.
\end{thm}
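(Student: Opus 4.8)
The plan is to start from Lemma~\ref{lem:dezanumbers} and pin down exactly when a $q$-Deza graph fails to be strongly regular. The key preliminary observation for $q=2$ is a dichotomy on the attainable common-neighbour values: if $X,Y$ are adjacent then $N_{\E}(X)\cap N_{\E}(Y)$ is a subspace containing the edge $\langle X,Y\rangle$, so its number of common neighbours has the form $2^{d}-3$ with $d\geq 2$; if $X,Y$ are non-adjacent then $N_{\E}(X)\cap N_{\E}(Y)$ is a subspace containing neither vertex, so the value has the form $2^{e}-1$ with $e\geq 0$. The only value attainable by both kinds of pairs is $1$. Combined with Lemma~\ref{lem:dezanumbers} this shows that if $\E$ is not strongly regular, then necessarily $q=2$, one of the two values equals $1$, and that value is realised by both an adjacent and a non-adjacent pair (the ``mixed'' case); in every other configuration the two relations coincide with adjacency and non-adjacency, forcing strong regularity. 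Write the second value as $w\neq 1$.

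By Lemma~\ref{lem:dezanumbers}, $w$ is either attained only by adjacent pairs, whence $w=2^{d}-3$ with $d\geq 3$, or only by non-adjacent pairs, whence $w=2^{e}-1$ with $e=0$ or $e\geq 2$. I would dispose of the first (``adjacent'') case as follows. A value-$w$ edge $\{X,Y\}$ forces $U:=N_{\E}(X)\cap N_{\E}(Y)$ to be a $d$-space that is a clique with $N_{\E}(W_{1})\cap N_{\E}(W_{2})=U$ for all $W_{1},W_{2}\in U$; in particular $U\subseteq N_{\E}(W)$ for every $W\in U$. If $d=k+1$ this makes $U$ a complete connected component, and regularity then forces $\E$ to be a disjoint union of complete graphs on a $(k+1)$-spread; such a graph has value set $\{w,0\}$ and is strongly regular, contradicting the presence of value $1$. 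If $3\leq d\leq k$, I would instead use Theorem~\ref{th:dezastandardequation}: here the value-$w$ pairs are precisely the value-$w$ edges, so $1\leq n_{\Gamma}(w)\leq 2^{k+1}-3$, and substituting the formula shows that $2^{v}$ must lie in a short interval just below $(2^{k+1})^{2}=2^{2k+2}$ that contains no power of $2$. This eliminates the adjacent case entirely.

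In the remaining case all adjacent pairs have exactly one common neighbour, which establishes the final assertion of the theorem and supplies the governing local structure: for each vertex $X$ the neighbours of $X$ inside $N_{\E}(X)$ split into a perfect matching $Y\leftrightarrow X+Y$, equivalently every $2$-space through $X$ contained in $N_{\E}(X)$ is an edge. The value is then $w=2^{e}-1$ with $e=0$ or $e\geq 2$. The subcase $w=0$ gives the third family: Theorem~\ref{th:dezastandardequation} yields $n_{\Gamma}(1)=(2^{k+1}-2)(2^{k+1}-3)$, and requiring a value-$0$ pair to exist (so that $\E$ is not strongly regular) forces $2^{v}>(2^{k+1}-2)(2^{k+1}-3)+2$, i.e.\ $v\geq 2k+2$; excluding $k=1$ (which gives a disjoint union of triangles, hence strongly regular) gives $k\geq 2$, and integrality of the edge count $\frac{[v]_{2}[k]_{2}}{[2]_{2}}$ from Lemma~\ref{lem:numberedges} forces $vk$ to be even.

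The subcase $w=2^{a}-1$ with $a\geq 2$ gives the second family, and this is where the main difficulty lies. For a value-$(2^{a}-1)$ non-adjacent pair $X,Z$ the common-neighbour space $U=N_{\E}(X)\cap N_{\E}(Z)$ is an $a$-space of pairwise non-adjacent lines, and the ``dual'' space $U^{*}=\bigcap_{W\in U}N_{\E}(W)$ is a subspace meeting $U$ trivially and joined completely to it; exploiting this configuration together with the local matching structure I would establish the crucial equality $v=2k+1$. Once $v=2k+1$ is known, the rest is arithmetic: the odd value of $v$ together with integrality of $\frac{[v]_{2}[k]_{2}}{[2]_{2}}$ (Lemma~\ref{lem:numberedges}) forces $k$ even, and Theorem~\ref{th:dezastandardequation} simplifies to $n_{\Gamma}(2^{a}-1)=\frac{(2^{k}-1)(2^{k}-4)}{2^{a-1}-1}$, whose integrality amounts to $2^{a-1}-1\mid(2^{k}-1)(2^{k-2}-1)$; a short order-of-$2$ argument converts this into the condition $(a-1)\mid k$ or $(a-1)\mid(k-2)$ and excludes $a=k$, giving $2\leq a\leq k-1$. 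I expect the determination of $v=2k+1$ to be the hard part: the counting identity of Theorem~\ref{th:dezastandardequation} only yields the upper bound $v\leq 2k+1$ together with a much weaker lower bound, so the exact value has to come from the geometry of the neighbourhood subspaces rather than from counting alone.
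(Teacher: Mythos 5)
Your overall strategy coincides with the paper's: reduce via Lemma~\ref{lem:dezanumbers} to the case $q=2$ with one common-neighbour value equal to $1$ realised by both an adjacent and a non-adjacent pair, split on whether the other value $w$ is attained only by adjacent pairs ($w=2^{d}-3$) or only by non-adjacent pairs ($w=2^{e}-1$), and then combine Theorem~\ref{th:dezastandardequation} with Lemma~\ref{lem:numberedges} to pin down the parameters. Your disposal of the adjacent case by sandwiching $2^{v}$ strictly between $2^{2k+1}$ and $2^{2k+2}$ is a genuinely different (and arguably slicker) route than the paper's, which instead narrows down to $a\in\{k,k+1\}$ and kills the residual parameter set $(5,2,5,1;2)$ by a hands-on geometric argument; your version checks out, except that the bound $n_{\Gamma}(w)\leq 2^{k+1}-3$ is not justified (you only know value $1$ occurs on \emph{some} adjacent pair, not on one through the particular vertex used in the count), so you must use $n_{\Gamma}(w)\leq 2^{k+1}-2$ and observe separately that equality at the boundary $d=k$ forces every neighbour to be a $w$-neighbour, hence strong regularity.

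The genuine gap is the one you flagged yourself: you never prove $v=2k+1$ in the second family, and the configuration you propose to use (the space $U$ of pairwise non-adjacent common neighbours and its ``dual'' $U^{*}$) is not developed into an argument. Moreover, you have misjudged where the difficulty lies: the lower bound is not hard and requires no such structure. Since $\E$ is not strongly regular there must exist a \emph{non-adjacent} pair $X,Y$ with exactly one common neighbour; for such a pair neither $X$ nor $Y$ lies in $N_{\E}(X)\cap N_{\E}(Y)$, so this intersection is a $1$-dimensional subspace. But if $v\leq 2k$, Grassmann's formula gives $\dim\bigl(N_{\E}(X)\cap N_{\E}(Y)\bigr)\geq 2(k+1)-v\geq 2$, i.e.\ at least three common neighbours, a contradiction. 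Hence $v\geq 2k+1$, which together with the counting upper bound $v\leq 2k+1$ gives the equality. (This is exactly the paper's argument, and it is also what underlies the bound $v\geq 2k+2$ in your $w=0$ subcase.) With that one-line dimension count supplied, the remaining arithmetic you describe — $k$ even from the edge count, exclusion of $a=k$, and the divisibility condition $(a-1)\mid k$ or $(a-1)\mid(k-2)$ from $\bigl(2^{a-1}-1\bigr)\mid\bigl(2^{k}-1\bigr)\bigl(2^{k-2}-1\bigr)$ — matches the paper and completes the proof.
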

\begin{proof}
	Given Lemma \ref{lem:dezanumbers} we distinguish between six cases. If any two vertices that have $\lambda_{1}$ or $\lambda_{2}$ common neighbors are adjacent, then $\E$ is a complete graph, and thus strongly regular. If any two vertices that have $\lambda_{1}$ or $\lambda_{2}$ common neighbors are not adjacent, then $\E$ is an empty graph, and thus strongly regular. If any two vertices that have $\lambda_{1}$ common neighbors are adjacent, and any two vertices that have $\lambda_{2}$ common neighbors are not adjacent, then any two adjacent vertices have $\lambda_{1}$ common neighbors and any two non-adjacent vertices have $\lambda_{2}$ common neighbors, so $\E$ is strongly regular. If $\lambda_{1}=\lambda_{2}=1$, then $\E$ is clearly also strongly regular. Now, we look at the two remaining cases. We may assume that $\lambda_{1}\neq\lambda_{2}$ and that there are both pairs of vertices having $\lambda_{1}$ common neighbors and pairs of vertices having $\lambda_{2}$ common neighbors, since otherwise the graph is clearly strongly regular.
	\par Assume that any two vertices that have $\lambda_{1}\neq 1$ common neighbors are adjacent, and that $\lambda_{2}=1$ and $q=2$. Let $X$ and $X'$ be different vertices having $\lambda_{1}$ common neighbors. Since $N_{\E}(X)$ and $N_{\E}(X')$ are subspaces, also $N_{\E}(X)\cap N_{\E}(X')$ is a subspace. Note that $N_{\E}(X)\cap N_{\E}(X')$ contains the 2-subspace $\langle X,X'\rangle$. It follows that $\lambda_{1}=(2^{a}-1)-2=2^{a}-3$ for some  integer $a\geq2$. Since $\lambda_{1}\neq1$, we even have $a\geq3$. Furthermore, $a\leq k+1$ since all common neighbors of two vertices are contained in the neighborhood of one vertex. From Theorem \ref{th:dezastandardequation} it follows that
	\begin{align*}
		n_{\E}(\lambda_{1})=2\frac{\left(2^{k+1}-3\right)\left(2^{k}-1\right)-\left(2^{v-1}-1\right)}{(2^{a}-4)}=\frac{\left(2^{k+1}-1\right)\left(2^{k-1}-1\right)-\left(2^{v-2}-1\right)}{2^{a-2}-1}\;.
	\end{align*}
	Now, since $n_{\E}(\lambda_{1})\geq0$ we have $v-2<2k$, so $v\leq 2k+1$. However, if $v\leq 2k$ then the intersection of any two $(k+1)$-spaces has dimension at least 2, contradicting that there are (non-adjacent) vertices with $\lambda_{2}=1$ common neighbor. So, $v=2k+1$. Hence,
	\begin{align*}
		n_{\E}(\lambda_{1})=2\frac{\left(2^{k}-1\right)\left(2^{k-2}-1\right)}{2^{a-2}-1}\;.
	\end{align*}
    Fix an arbitrary vertex $X$. Since all vertices that have $\lambda_{1}$ common neighbors with $X$ are adjacent to $X$, it follows that $n_{\E}(\lambda_{1})\leq|N_{\E}(X)|=2^{k+1}-2$. This implies that
	\begin{align}\label{eq:deza5}
		2^{k-2}-1\leq 2^{a-2}-1\;.
	\end{align}
	Since $a\leq k+1$, we have that $a\in\{k,k+1\}$. If $a=k$, then we have equality in Equation~\eqref{eq:deza5}, so any neighbor of $X$ has $\lambda_{1}$ common neighbors with $X$. As $X$ was arbitrary it follows that two vertices are adjacent if and only if they have $\lambda_{1}$ common neighbors. So, $\E$ is strongly regular.
	\par If $a=k+1$, then $n_{\E}(\lambda_{1})=2^{k}-3-\frac{1}{2^{k-1}-1}$, so $k=2$ as $n_{\E}(\lambda_{1})$ must be an integer. So, $\E$ has parameters $(5,2,2^{3}-3,1;2)$. Let $Y$ and $Z$ be two vertices that have $\lambda_{1}$ common neighbors. By assumption $Y$ and $Z$ are adjacent. The subspaces $N_{\E}(Y)$ and $N_{\E}(Z)$ contain $2^{3}-1=7$ vector lines (vertices) but $Y$ and $Z$ have 5 common neighbors, so $N_{\E}(Y)=N_{\E}(Z)$. We denote this 3-space in $\F^{5}_{2}$ by $\sigma$. Any vertex $V$ in $\sigma\setminus\langle Y,Z\rangle$ is adjacent to both $Y$ and $Z$, so $N_{\E}(V)=\sigma$. Repeating the argument with two other vertices $N_{\E}(V)=\sigma$ for any vertex $V\in\langle Y,Z\rangle$. Now, let $W$ be a vertex not in $\sigma$. Then $Y$ and $W$ are not adjacent, hence have $\lambda_{2}=1$ common neighbor, say $W'$. However, then $Y\in N_{\E}(W')=\sigma$, a contradiction. This concludes this case.
	\par Assume now that any two vertices that have $\lambda_{1}\neq 1$ common neighbors are not adjacent, and that $\lambda_{2}=1$ and $q=2$. Let $X$ and $X'$ be different vertices having $\lambda_{1}$ common neighbors. Since $N_{\E}(X)$ and $N_{\E}(X')$ are subspaces, also $N_{\E}(X)\cap N_{\E}(X')$ is a subspace. Note that $N_{\E}(X)\cap N_{\E}(X')$ contains neither $X$ nor $X'$. It follows that $\lambda_{1}=2^{a}-1$ for some integer $a\leq k$. Since $\lambda_{1}\neq1$, we have $a\neq 1$. %Furthermore, $a\neq 0$ since all common neighbors of two vertices are contained in the neighborhood of one vertex. 
	From Theorem \ref{th:dezastandardequation} it follows that
	\begin{align*}
		n_{\E}(\lambda_{1})=2\frac{\left(2^{k+1}-3\right)\left(2^{k}-1\right)-\left(2^{v-1}-1\right)}{(2^{a}-2)}=2\frac{\left(2^{k+1}-1\right)\left(2^{k-1}-1\right)-\left(2^{v-2}-1\right)}{2^{a-1}-1}\;.
	\end{align*}
	Now, since $n_{\E}(\lambda_{1})\geq0$ we have that either $a\geq2$ and $v-2<2k$, so $v\leq 2k+1$, or that $a=0$ and $v-2\geq2k$. In the latter case the number of edges of $\E$ equals $\frac{\left(2^v-1\right)\left(2^{k}-1\right)}{3}$, by Lemma \ref{lem:numberedges}. Hence, $v k$ must be even and we find the third possibility in the statement of the lemma. Note that for $k=1$ the $q$-graph $\E$ would be strongly regular. We look now at the former case.
	\par Assume that $a\geq2$ and $v\leq 2k+1$. We know that any two adjacent vertices have $\lambda_{2}=1$ common neighbor. If any two non-adjacent vertices have $\lambda_{1}$ common neighbors the graph is strongly regular, so we can assume without loss of generality that there are two non-adjacent vertices having $\lambda_{2}=1$ common neighbor. If $v\leq 2k$ then the intersection of any two $(k+1)$-spaces has dimension at least 2, contradicting that there are non-adjacent vertices with $\lambda_{2}=1$ common neighbor. So, $v=2k+1$. It follows from Lemma \ref{lem:numberedges} that the number of edges of $\E$ equals $\frac{\left(2^{2k+1}-1\right)\left(2^{k}-1\right)}{3}$. Hence, $k$ must be even.
	We also find that
	\begin{align*}
		n_{\E}(\lambda_{1})=2\frac{\left(2^{k+1}-1\right)\left(2^{k-1}-1\right)-\left(2^{2k-1}-1\right)}{2^{a-1}-1}=4\frac{\left(2^{k}-1\right)\left(2^{k-2}-1\right)}{2^{a-1}-1}\;.
	\end{align*}
	We see that if $k=2$, then $n_{1}(\lambda_{1})=0$, and then $\E$ is strongly regular. So, we can assume $k\geq4$. Also, if $a=k$, then
	\begin{align*}
	    n_{\E}(\lambda_{1})=4\frac{\left(2^{k}-1\right)\left(2^{k-2}-1\right)}{2^{k-1}-1}=8\left(2^{k-2}-1\right)+4\cdot \frac{2^{k-2}-1}{2^{k-1}-1}\notin\N\:,
	\end{align*}
	so $a\leq k-1$. Let $2 \le a \le k-1$ be such that $(a-1) \nmid k$ and $(a-1) \nmid (k-2)$. Define $t = \gcd(k, a-1)$. From $n_{\E}(\lambda_{1})\in\N$, $\gcd(4, 2^{a-1}-1) = 1$ and $\gcd(2^s-1, 2^{a-1}-1) = 2^{\gcd(s,a-1)}-1$ for all $s\in\N$, it follows that $(2^{a-1}-1) \mid \left(2^{t}-1\right)\left(2^{\gcd(k-2,a-1)}-1\right)$. If $t=1$, we have that $(a-1)\mid\gcd(k-2,a-1)$ and thus $(a-1)\mid(k-2)$, a contradiction. If $t >1$, then $\gcd(k-2, a-1)$ equals $1$ or $2$. In the former case, we find that $(a-1)\mid t\mid k$, a contradiction. If the latter case occurs, then $a$ is odd and the equality $\frac{3}{c} = \frac{2^{a-1}-1}{2^t-1}\in\N$ holds true, for some integer $c$. It turns out that either $c = 3$ and $a-1=t$, and thus $(a-1) \mid k$, a contradiction, or $c = 1$ and $2^t-1 = \frac{2^{a-1}-1}{3}$, where $2^{t}-1 \equiv -1 \pmod{4}$ and $\frac{2^{a-1}-1}{3} \equiv{1} \pmod{4}$, a contradiction. We find the second example in the statement of the theorem.
\end{proof}

For the third class of $q$-Deza graphs given by Theorem~\ref{th:dezaclassification}, the following result holds. 

\begin{prop}\label{prop:extendthroughspread}
If there exists a $q$-Deza graph with parameters $(v, k, 1, 0; 2)$, then there exists a $q$-Deza graph with parameters $(vt, k, 1, 0; 2)$ for every integer $t\geq2$. 
\end{prop}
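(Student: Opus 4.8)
The plan is to mimic, in the $q$-ary setting, the classical construction of taking disjoint copies of a graph, the crucial feature being that the smaller intersection parameter is $a=0$. Concretely, suppose $\E$ is a $q$-Deza graph with parameters $(v,k,1,0;2)$ on the vector space $\F_2^{v}$. Since $v\mid vt$, Segre's theorem guarantees that $\F_2^{vt}$ admits a $v$-spread $\mathcal{G}$. For each spread element $G\in\mathcal{G}$, fix a vector space isomorphism $\F_2^{v}\to G$ and use it to transport the edge set of $\E$ onto $G$, obtaining a copy $\E_G$ of $\E$ living inside $G$. I would then define a $q$-ary graph $\E'$ on $\F_2^{vt}$ whose edge set is exactly the union, over all $G\in\mathcal{G}$, of the edges of $\E_G$; in particular $\E'$ has no edges joining vertices from distinct spread elements.

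Second, I would check $k$-regularity. Every vertex $X$ lies in a unique spread element $G$, and by construction every edge through $X$ is a plane contained in $G$, so all neighbours of $X$ are $1$-spaces of $G$. Hence $N_{\E'}(X)$ coincides with the neighbourhood of $X$ in the copy $\E_G$, which is a $(k+1)$-space because $\E\in\RG(v,k;2)$. Thus $\E'\in\RG(vt,k;2)$, and moreover each such neighbourhood is contained in its spread element.

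Third, I would verify the Deza property by splitting into two cases. If $X\neq Y$ lie in the same spread element $G$, then $N_{\E'}(X)\cap N_{\E'}(Y)$ is computed entirely inside $G$ and equals the corresponding intersection in the copy $\E_G\cong\E$, hence it contains $0$ or $1$ vertices other than $X$ and $Y$. If $X$ and $Y$ lie in distinct spread elements $G\neq G'$, then $N_{\E'}(X)\subseteq G$ and $N_{\E'}(Y)\subseteq G'$, so $N_{\E'}(X)\cap N_{\E'}(Y)\subseteq G\cap G'=\{0\}$, which contains no vertex, giving exactly $0$ common neighbours. In both cases the intersection value lies in $\{0,1\}$, so $\E'$ is a $q$-Deza graph with parameters $(vt,k,1,0;2)$; both values are attained, since the copies of $\E$ realise the value $1$ and cross-element pairs, which exist because $t\geq2$, realise the value $0$.

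The construction itself is short; the only point requiring care is the cross-element case, and this is precisely where the hypothesis $a=0$ is indispensable. Because distinct spread elements meet only in the zero vector, two vertices from different copies automatically have no common neighbour, and this forces the cross-element parameter to be $0$ rather than a positive value. Had the original graph had $a>0$, the disjoint-copies construction over a spread would produce pairs with $0$ common neighbours that do not fit the prescribed parameter set, so no analogous statement would hold.
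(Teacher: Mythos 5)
Your proposal is correct and is essentially the paper's own proof: the paper likewise takes a $(v-1)$-spread of $\PG(vt-1,2)$ (equivalently, a $v$-spread of $\F_2^{vt}$), places a copy of the given $q$-Deza graph in each spread member, and takes the union, leaving the verification as "easily seen." You have simply written out in full the regularity and Deza-property checks that the paper omits, correctly isolating the role of $a=0$ in the cross-element case.
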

\begin{proof}
Let $\mathcal{D}$ be a $(v-1)$-spread of $\PG(vt-1, 2)$ and consider a $q$-Deza graph with parameters $(v, k, 1, 0; 2)$ in each of the $\frac{2^{vt}-1}{2^{v}-1}$ members of $\mathcal{D}$. It is easily seen that their union is a $q$-Deza graph with parameters $(vt, k, 1, 0; 2)$.  
\end{proof}
Recall that there are examples of $q$-Deza graphs with parameters $(6,2,1,0;2)$, see Example \ref{ex:DezasplitCayleyhexagon} and Example \ref{ex:singer}. Note that the $q$-Deza graph from Proposition \ref{prop:extendthroughspread} is disconnected. 
\begin{cor}
There exists $q$-Deza graphs with parameters $(6t, 2, 1, 0; 2)$ for all positive integers $t$.
\end{cor}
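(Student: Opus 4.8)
The plan is to obtain this as an immediate consequence of the two results that precede it: the existence of a $q$-Deza graph with the base parameters $(6,2,1,0;2)$, established in Example~\ref{ex:DezasplitCayleyhexagon} and Example~\ref{ex:singer}, together with the spread-doubling construction of Proposition~\ref{prop:extendthroughspread}. Since the parameters requested in the corollary are exactly the base parameters $(v,k,\lambda_1,\lambda_2;q)=(6,2,1,0;2)$ scaled by the integer $t$, and Proposition~\ref{prop:extendthroughspread} is precisely the statement that one may pass from a $q$-Deza graph with parameters $(v,k,1,0;2)$ to one with parameters $(vt,k,1,0;2)$, essentially no new work is needed.

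Concretely, I would argue as follows. First treat the case $t=1$: here the desired parameters are $(6,2,1,0;2)$, and a $q$-Deza graph with exactly these parameters is exhibited in Example~\ref{ex:DezasplitCayleyhexagon} (the line set of the split Cayley hexagon $\mathcal{H}(2)$ regularly embedded in $\PG(5,2)$), so the base case is immediate. Then, for every integer $t\geq 2$, apply Proposition~\ref{prop:extendthroughspread} with $v=6$ and $k=2$ to the base graph; this yields a $q$-Deza graph with parameters $(6t,2,1,0;2)$, as required. Taking the two cases together covers all positive integers $t$.

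The only point requiring a moment's care is the bookkeeping at the boundary of the hypotheses: Proposition~\ref{prop:extendthroughspread} is stated for $t\geq 2$, so the value $t=1$ must be handled separately, but this is exactly the already-established base example rather than an output of the proposition. Beyond that, there is no genuine obstacle; the corollary is a direct specialization of the general existence-and-extension mechanism developed above, and the proof amounts to citing Example~\ref{ex:DezasplitCayleyhexagon} (equivalently Example~\ref{ex:singer}) for $t=1$ and invoking Proposition~\ref{prop:extendthroughspread} for $t\geq 2$.
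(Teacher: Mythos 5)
Your proof is correct and matches the paper's intended argument exactly: the corollary follows from the examples with parameters $(6,2,1,0;2)$ for the case $t=1$ and from Proposition~\ref{prop:extendthroughspread} for $t\geq 2$. Your explicit handling of the boundary case $t=1$ is a small but welcome clarification of what the paper leaves implicit.
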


\subsection{The \texorpdfstring{$q$}{q}-Deza graphs with parameters \texorpdfstring{$(6,2,1,0;2)$}{(6,2,1,0;2)}}

Let $\E$ be a $q$-Deza graph with parameters $(6, 2, 1, 0; 2)$. In this subsection, we will prove that $\E$ arises from Example~\ref{ex:DezasplitCayleyhexagon} or Example~\ref{ex:singer}.

We know that $\E$ consists of $63$ lines of $\PG(5, 2)$, such that through a point $P \in \PG(5, 2)$ there pass $3$ lines of $\E$ and these lines are in a plane, which we will denote by $\pi_P$. Moreover, $\pi_P \cap \pi_Q =  P Q \in \E$, if $Q \in \pi_P \setminus \{P\}$, whereas there are $24$ points $Q \in \PG(5, 2) \setminus \pi_P$ such that $|\pi_Q \cap \pi_P| = 1$ and the remaining $32$ points $Q$ of $\PG(5, 2)$ are such that $|\pi_Q \cap \pi_P| = 0$. It is easily seen that 
\begin{align}
P \in \pi_Q \iff Q \in \pi_P. \label{p}
\end{align}

\begin{lemma}\label{l1}
Let $H$ be a hyperplane of $\PG(5, 2)$ and let $\E_H$ be the set of lines of $\E$ contained in $H$. Then $|\E_H| = 15$ and through a point of $H$ there pass either $1$ or $3$ lines of $\E_H$. Moreover, there are precisely $7$ points of $H$ that are incident with $3$ lines of $\E_H$. 
\end{lemma}
\begin{proof}
The lines of $\E\setminus\E_{H}$ meet $H$ in a point. Counting the pairs $(P,\ell)$ with $\ell\in\E_{H}$ and $P$ a point in $H$ on $\ell$, we find $3|\E_{H}|+(63-|\E_{H}|)=3 \times 31$. So, there are precisely $15$ lines of $\E$ contained in a hyperplane $H$. Note that if $P \in H$, then either $\pi_P \cap H$ is a line of $\E_H$ or $\pi_P \subset H$. Hence there is $1$ or $3$ lines of $\E_H$ through $P$. Let $x_i$ be the number of points of $H$ incident with $i$ lines of $\E_H$. Then $x_1 + x_3 = 31$ and $x_1 + 3x_3 = 45$, where the last equality is obtained by counting the pairs $(P, \ell)$, $P \in H$, $\ell \in \E_H$, $P \in \ell$. The statement follows.   
\end{proof}

Let $\S_H$ denote the set consisting of the $7$ points of $H$ incident with $3$ lines of $\E_H$.

\begin{example}\label{badex}
Let $H$ be a hyperplane in $\PG(5,2)$, let $P_1,P_2,P_3,P_4,P_5$ be five points that span $H$, and set $P_6=P_1+P_2+P_5$ and $P_7=P_3+P_4$. Let 
\begin{align*}
%\S_H &= \{P_1 = (1,0,0,0,0,0), P_2 = (0,1,0,0,0,0), P_3 = (0,0,1,0,0,0), P_4 = (0,0,0,1,0,0), \\
\cQ &= \{P_1, P_2, P_3, P_4, P_5, P_6, P_7\}, \\
\cR &= \{ P_1 P_2, \; P_2 P_3, \; P_3 P_4, \; P_4 P_5, \; P_5 P_1, \; P_1 P_6, \; P_6 P_7, \; P_2 (P_1+P_3), \; P_3 (P_2+P_4), \\ 
&\qquad P_7 (P_3+P_6), P_4 (P_3+P_5), \; P_5 (P_1+P_4), \; P_6 (P_1+P_7), \\
&\qquad (P_1 + P_2 + P_4) (P_2+P_3+P_5), \; (P_1 + P_3+P_5) (P_2+P_4+P_5)\}.
\end{align*}
Then $\cR$ consists of $15$ lines of $H$ such that through a point of $\cQ$ there pass three coplanar lines of $\cR$, whereas a point of $H \setminus \cQ$ is incident with precisely one line of $\cR$. Note that $P_3$, $P_4$ and $P_7$ are collinear; let $\ell$ be the line consisting of these three points. The four points $P_1, P_2, P_5, P_6$ are coplanar; let $\gamma$ be the plane containing them. Note that $\ell$ and $\gamma$ are disjoint. In particular $P_1, P_2, P_5, P_6$ form the complement of a line $m$ in $\gamma$. There are 13 lines in $\cR$ that contain at least one of the points in $\cQ$. These 13 lines cover in total 25 points. The six remaining points of $H$ form two lines, which are the two final lines of $\cR$. The stabilizer of $\cR$ in $\PGL(5, 2)$ is a group of order $6$, isomorphic to $S_3$. %It is clear that there are three solids in $H$ containing exactly one line of $\E_H$, namely the three solids through $m$ that do contain neither $\gamma$ nor $\ell$, i.e.~the solids $\langle P_3,m,P_4+P_5\rangle$, $\langle P_4,m,P_6+P_7\rangle$ and $\langle P_7,m,P_2+P_3\rangle$ \textcolor{red}{I think the three solids are: $\langle m, P_3, P_2+P_4 \rangle$, $\langle m, P_4, P_3+P_5 \rangle$ and $\langle m, P_7, P_3+P_6\rangle$, indeed they are spanned by $m$ and a line of $\E_H$ intersecting $\S_H$ in one point of $\ell$ (see Lemma~\ref{l3})}. The stabilizer of $\E_H$ in $\PGL(5, 2)$ is a group of order $6$ and these three solids are projectively equivalent under the stabilizer of $\E_H$ in $\PGL(5, 2)$. 
\end{example}

\begin{rem}\label{prop-badex}
Let $\cQ$, $\cR$, $\ell$, $m$ and $\gamma$ be as described in Example~\ref{badex}. There are three solids in $H$ containing exactly one line of $\cR$, namely $\langle m, P_3, P_2+P_4 \rangle$, $\langle m, P_4, P_3+P_5 \rangle$ and $\langle m, P_7, P_3+P_6\rangle$, and one point of $\cQ$ on that line. These are the three solids spanned by $m$ containing neither $\gamma$ nor $\ell$. Each of them meets $\ell$ in precisely one point and is disjoint to $\gamma \cap \cQ$, see also Lemma~\ref{l3}. Note that this unique line of $\cR$ in such a solid contains a unique point of $\cQ$. Moreover, these three solids are projectively equivalent under the stabilizer of $\cR$ in $\PGL(5, 2)$.
\par Since $|\cR^{\PGL(5, 2)}| = |\PGL(5, 2)|/6 = 1666560$, a double counting argument on the tuples $(P, r, \Sigma, \mathcal{L})$, where $\mathcal{L} \in \cR^{\PGL(5, 2)}$, $\Sigma$ is a solid of $H$ containing exactly the line $r$ of $\mathcal{L}$ and $P$ is the point of $\cQ$ on $r$, gives 
\begin{align*}
    1666650 \times 3 = z \times 31 \times 35 \times 3.
\end{align*}
Here $z$ denotes the number of elements of $\cR^{\PGL(5, 2)}$ having precisely one line in a fixed but arbitrary $\Sigma$. Therefore $z = 1536$. 
\end{rem}

\begin{lemma}\label{l2}
If $\S_H \not\subset \pi_P$ for all $P \in \S_H$, then $|\S_H \cap \pi_T| \in \{3, 4\}$ for any $T\in\S_{H}$ and both cases occur.
\end{lemma}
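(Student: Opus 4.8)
The plan is to translate the statement into a question about the collinearity graph on $\S_H$ and then bound the vertex degrees. For $T\in\S_H$ the three lines of $\E$ through $T$ all lie in $\E_H$, so $\pi_T\subseteq H$; and a point $Q\in\S_H$ lies in $\pi_T$ exactly when $Q=T$ or $Q$ is collinear with $T$ (i.e. $TQ\in\E$). Hence $|\S_H\cap\pi_T|=1+\delta(T)$, where $\delta(T)$ denotes the number of points of $\S_H$ collinear with $T$, and I must show $\delta(T)\in\{2,3\}$ for every $T$ with both values occurring. I first record two structural facts. Since $H\cong\PG(4,2)$ and $\pi_P,\pi_Q$ are planes for $P,Q\in\S_H$, they always meet, so no two points of $\S_H$ are ``far'' ($\pi_P\cap\pi_Q=\emptyset$ is impossible): every pair in $\S_H$ is either collinear or meets in a single point. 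Second, from the Deza parameters two adjacent vertices have a unique common neighbour (the third point of their line), so inside any $\pi_T$ the only lines of $\E$ are the three through $T$; consequently two neighbours of $T$ on different lines through $T$ are never collinear.

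The key tool is a ``trace'' argument. Fix $X\in\S_H$ and a non-neighbour $U\in\S_H$ (necessarily close, by the first fact), so $\pi_X\cap\pi_U$ is a single point $Z$. I claim $Z\in\S_H$ and $Z$ is collinear with $X$: indeed $Z\in\pi_X\setminus\{X\}$ gives $XZ\in\E_H$, and if $Z$ were a point of $H$ on only one line of $\E_H$ (Lemma~\ref{l1}), that line would be $XZ$, whence $\pi_Z\cap H=XZ\subseteq\pi_X$; since $Z\in\pi_U$ gives $U\in\pi_Z$ by~\eqref{p}, we would obtain $U\in\pi_Z\cap H\subseteq\pi_X$, contradicting that $U$ is not collinear with $X$. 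Moreover $U$ is collinear with exactly one neighbour of $X$, namely $Z$, since two such neighbours would both lie in $\pi_U\cap\pi_X=\{Z\}$. This single lemma does most of the work.

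From it I extract the bounds. If $\delta(X)=0$ then every other point of $\S_H$ is a non-neighbour whose trace would have to be a point of $\S_H$ collinear with $X$ — of which there are none; so $\delta(X)\ge 1$ always. If $\delta(X)=1$, say $X$ collinear only with $V$, then every non-neighbour traces to $V$ and hence lies in $\pi_V$, forcing $\S_H\subseteq\pi_V$, i.e. $\S_H=\pi_V$ — excluded by hypothesis (this is the only place the hypothesis is used). For the upper bound, $\delta(T)=6$ gives $\S_H=\pi_T$ (again excluded); $\delta(T)=5$ leaves a single non-neighbour, which then has degree $1$ (excluded); and $\delta(T)=4$ leaves two non-neighbours $U,U'$, each of degree $\ge2$ and hence, not being of degree $1$, collinear with each other. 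Applying the trace lemma to $U$ (whose neighbours are $Z_U\in\S_H$ and $U'$) and counting how its four non-neighbours distribute over $\{Z_U,U'\}$ then contradicts the restriction that a neighbour of $T$ has at most one collinear partner among the neighbours of $T$. Thus $\delta(T)\in\{2,3\}$ and $|\S_H\cap\pi_T|\in\{3,4\}$.

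Finally, both cases occur. The degrees cannot all equal $3$, since a $3$-regular graph on the $7$ points of $\S_H$ violates the handshake lemma; and they cannot all equal $2$, since for a degree-$2$ vertex $T$ with neighbours $V_1,V_2$ the trace lemma confines each of its four non-neighbours to a point collinear with $V_1$ or with $V_2$, and each $V_j$ (having degree $2$) admits at most one such further point, giving at most two non-neighbours — a contradiction. Hence some $|\S_H\cap\pi_T|=3$ and some $=4$. I expect the main obstacle to be the trace lemma itself, specifically the step ruling out that $Z$ is a shallow point, together with the $\delta(T)=4$ elimination, which is the one case not immediately dispatched by the degree-$0$ and degree-$1$ impossibilities.
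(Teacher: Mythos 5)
Your proof is correct, and its engine is the same as the paper's: the observation that for non-collinear $X,U\in\S_H$ the planes $\pi_X,\pi_U$ meet in a single point of $\S_H$ collinear with both (your ``trace lemma'' is exactly the covering statement \eqref{eq:Shplane} in the paper, plus the uniqueness remark). The differences are in how the cases are dispatched. You recast everything in terms of the degree $\delta(T)=|\S_H\cap\pi_T|-1$ in the collinearity graph on $\S_H$ and import an extra structural fact --- that adjacent vertices have a unique common neighbour, hence two neighbours of $T$ on different $\E$-lines through $T$ are never collinear --- which the paper's proof of this lemma does not use. This lets you kill $\delta=5,6$ by reduction to the (already excluded) degrees $0$ and $1$, and $\delta=4$ by a distribution count (at least two of the three points of $N\setminus\{Z_U\}$ would have to be collinear with $Z_U$, but at most one can be), whereas the paper exhibits three points of $\S_H$ outside $\pi_T$ directly. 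For ``both cases occur'' the routes genuinely diverge: the paper shows that a value-$3$ plane forces value-$4$ planes and vice versa, and excludes the all-$4$ configuration because $3\nmid 7$; you rule out ``all degrees $3$'' by the handshake lemma and ``all degrees $2$'' by noting that the four non-neighbours of a degree-$2$ vertex cannot all be absorbed by its two neighbours. Both arguments are elementary and of comparable length; yours has the small advantage of not needing the full-line case distinction, at the cost of invoking the unique-common-neighbour property (which you do justify from the parameters $(6,2,1,0;2)$). The only places where your write-up is terser than it should be are the $\delta(T)=4$ elimination and the ``not all degrees equal $2$'' step, but both expand correctly into complete arguments.
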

\begin{proof}
    Consider a point $T\in\S_H$. For every point $Q\in\S_H\setminus\pi_T$, we know that $\pi_T\cap\pi_Q$ is a point distinct from $T$, and this point is necessarily contained in $\S_H$. So, we have that $|\S_H\cap\pi_T|\geq2$, and that
    \begin{align}\label{eq:Shplane}
        \S_H\subseteq\bigcup_{X\in(\S_H\cap\pi_T)\setminus\{T\}}\pi_{X}\:.
    \end{align}
    \par If $|\S_H\cap\pi_T|=2$, let $R$ be the point such that $\S_H\cap\pi_T=\{T,R\}$. Then by \eqref{eq:Shplane}, we have that $\S_{H}\subseteq\pi_{R}$, contradicting the assumption. So, we may assume that $|\S_H\cap\pi_T|\geq3$ for all $T\in\S_H$.
    \par If $|\S_H\cap\pi_T|\in\{5,6\}$, let $Q$ be a point in $\S_H \setminus \pi_T$, and let $Q'\in\S_H$ be the unique point in $\pi_{T}\cap\pi_{Q}$. There are at least two points $X_1$ and $X_2$ in $\pi_{T}\cap\S_H$ not on the line $TQ'$. For any point $X_{i}$, we know that the unique point $X'_{i}$ in $\pi_{X_{i}}\cap\pi_{Q}$ belongs to $\S_H$. Moreover $X'_1\neq X'_2$ since $\pi_{X'_i}\cap\pi_T$ is a unique point, namely $X_i$. For the same reason $X'_{1}\neq Q\neq X'_{2}$, but this means there are 3 points of $\S_{H}$ not in $\pi_T$, a contradiction. So, we may assume that $|\S_H\cap\pi_T|\leq4$ for all $T\in\S_H$.
    \par Suppose that $\S_H \cap \pi_T = \{T, R, U\}$. By \eqref{eq:Shplane} we get that $\S_H \in \pi_{R} \cup \pi_U$. Therefore $|\S_H \cap \pi_R| = |\S_H \cap \pi_U| = 4$.
    \par Suppose that $\S_H \cap \pi_T= \{T, R, U, V\}$, and also that there is no line of $\pi_T$ through $T$ all whose points are contained in $\S_H$. A point of $\S_H\setminus\pi_T$ can be in at most one of the planes $\pi_R,\pi_U,\pi_V$. So, $|\S_H \cap \pi_R| = |\S_H \cap \pi_{U_1}| = |\S_H \cap \pi_{U_2}| = 3$, using \eqref{eq:Shplane}.
    \par The only case that is left, is where $|\S_H \cap \pi_T|=4$ for all $T\in\S_H$ and such that for all $T\in\S_H$ there is a line through $T$ in $\pi_T$ containing three points of $\S_H$. But this is impossible since 3 does not divide 7.
\end{proof}

\begin{lemma}\label{l3}
A solid in $H$ contains as many points of $\S_H$ as lines of $\E_H$. 
\end{lemma}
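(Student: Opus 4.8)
The plan is to prove the two quantities are equal by a single double count of the incident pairs $(P,r)$ where $P$ is a point of the solid $\Sigma$ and $r\in\E_H$ is a line of $\E_H$ passing through $P$. The whole argument rests on a numerical coincidence, so I would first record it: by Lemma~\ref{l1} we have $|\E_H|=15$, while a solid $\Sigma$ in $H\cong\PG(4,2)$ is a $\PG(3,2)$ and therefore also contains exactly $15$ points. Write $s=|\S_H\cap\Sigma|$ for the number of points of $\S_H$ in $\Sigma$ and $t=|\E_H\cap\Sigma|$ for the number of lines of $\E_H$ contained in $\Sigma$; the goal is to show $s=t$.

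Next I would count the incident pairs grouped by the point. Every point of $\Sigma$ lies in $H$, so Lemma~\ref{l1} applies: such a point carries exactly $3$ lines of $\E_H$ when it lies in $\S_H$ and exactly $1$ otherwise. Since $\Sigma$ has $15$ points, $s$ of which lie in $\S_H$, the number of incident pairs equals $3s+(15-s)=2s+15$.

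Then I would count the same pairs grouped by the line. Working over $\F_2$, a line $r\in\E_H$ meets the solid $\Sigma$, which is a hyperplane of $H$, either in all $3$ of its points (precisely when $r\subseteq\Sigma$) or in exactly one point. Since there are $15$ lines in $\E_H$, of which $t$ lie in $\Sigma$, the number of incident pairs equals $3t+(15-t)=2t+15$. Equating the two counts gives $2s+15=2t+15$, hence $s=t$, which is exactly the assertion.

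There is essentially no hard step here: once one notices that $|\E_H|$ and the number of points of a solid are both $15$, the additive constants cancel and the equality falls out. The only things to be careful about — and both are already available — are the point-degree dichotomy (every point of $H$ meets $1$ or $3$ lines of $\E_H$, with degree $3$ exactly on $\S_H$, from Lemma~\ref{l1}) and the fact that over $\F_2$ a line meets a solid in either $1$ or all $3$ of its points. I would make sure to state these two dichotomies explicitly so that the two counts are manifestly correct.
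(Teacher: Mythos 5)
Your proof is correct and is essentially the same as the paper's: both double count the incident pairs $(P,\ell)$ with $P\in\Sigma$, $\ell\in\E_H$, $P\in\ell$, using the degree dichotomy from Lemma~\ref{l1} on one side and the fact that a line of $H$ meets the hyperplane $\Sigma$ of $H$ in one or three points on the other, arriving at $3x+(15-x)=3y+(15-y)$. Your write-up just makes the two dichotomies more explicit than the paper does.
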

\begin{proof}
Let $\Sigma$ be a solid in $H$ containing $x$ points of $\S_H$ and $y$ lines of $\E_H$. By double counting the pairs $(P, \ell)$, $P \in \Sigma$, $\ell \in \E_H$, $P \in \ell$, we get $3x + (15-x) = 3y + (15-y)$, as required.  
\end{proof}

\begin{lemma}\label{l4}
If $\S_H \not\subset \pi_P$ for all $P \in \S_H$, then $\E_H$ is projectively equivalent to Example~\ref{badex}.
\end{lemma}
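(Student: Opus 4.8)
The plan is to encode the configuration in a graph on the seven points of $\S_H$, to determine that graph up to isomorphism, and then to coordinatise. Define $G$ on the vertex set $\S_H$ by $P\sim Q$ iff $PQ\in\E_H$; by \eqref{p} this is symmetric and equivalent to $Q\in\pi_P$. Three facts will drive everything. Under the hypothesis, Lemma~\ref{l2} gives $\deg_G(P)=|\S_H\cap\pi_P|-1\in\{2,3\}$ with both values occurring, and its proof shows moreover that every degree-$2$ vertex has both its neighbours of degree $3$. Next, if $P\not\sim Q$ then $\pi_P\cap\pi_Q$ is a single point of $\S_H$, so non-adjacent vertices have exactly one common neighbour. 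Finally, if $P\sim Q$ then $\pi_P\cap\pi_Q=PQ$, so their only possible common neighbour is the third point $P+Q$ of the line; hence triangles of $G$ correspond exactly to lines of $\E_H$ contained in $\S_H$.

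First I would determine $G$. Let $b$ be the number of degree-$3$ vertices and $a=7-b$ the number of degree-$2$ ones. A degree count shows $b$ is even and that there are $(5b-14)/2\ge 0$ edges among the degree-$3$ vertices, so $b\in\{4,6\}$. To exclude $b=6$: the unique degree-$2$ vertex $t$ has a $2$-element neighbourhood $N(t)$, and the common-neighbour condition forces each of the four remaining vertices to be adjacent to exactly one point of $N(t)$; those four then carry a $2$-regular graph, i.e.\ a $4$-cycle, and a diagonal of that cycle is a non-adjacent pair with two common neighbours, a contradiction. Hence $a=3$. The three degree-$2$ vertices have neighbourhoods that are three $2$-subsets of the $4$-element degree-$3$ part meeting pairwise in exactly one point; such a family is either a sunflower through a common vertex or a ``triangle'' of subsets, and the latter forces a star inside the degree-$3$ part, which again produces a non-adjacent pair with two common neighbours. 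Only the sunflower survives, and it yields a unique $G$: a triangle $\{w,x,y\}$, an apex $c$ adjacent to all of $\{1,2,3\}$, and a perfect matching $w1,\,x2,\,y3$.

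Next I would read off the geometry and coordinatise. The triangle $\{w,x,y\}$ is a collinear triple spanning a line $\ell$ with $y=w+x$; the three lines through the apex $c$ lie in $\pi_c$ and reach the mutually non-adjacent points $1,2,3$, so $\{c,1,2,3\}$ is a frame of $\pi_c$ and $3=c+1+2$. As $\ell\subseteq\S_H$ while $\pi_c\cap\S_H=\{c,1,2,3\}$, the line $\ell$ and the plane $\pi_c$ are disjoint, whence $c,1,2,w,x$ are independent; choosing them as a basis identifies $\S_H$ with $\cQ$ up to a projectivity. The thirteen lines of $\E_H$ meeting $\S_H$ are then determined: for each $T\in\S_H$, two of the lines of $\E_H$ through $T$ already join points of $\S_H$ and span $\pi_T$, so the third line through $T$ in $\pi_T$ is fixed. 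The two remaining lines are disjoint from $\S_H$ and cover exactly the six points of $H$ off the thirteen determined lines; to finish I would show these six points split into two lines in only one way, which identifies $\E_H$ with Example~\ref{badex}.

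I expect the combinatorial determination of $G$ to be the crux, the evenness-plus-edge-count together with the ``exactly one common neighbour'' condition being what collapses the cases; the $4$-cycle obstruction ruling out $b=6$ is the step most likely to hide subcases, so one must check carefully that the four outer vertices really form a $2$-regular graph. The other delicate point is the very last step, since a priori six points can be partitioned into two lines in more than one way; here the solid count of Lemma~\ref{l3}, as carried out in Remark~\ref{prop-badex}, is the natural tool to force uniqueness.
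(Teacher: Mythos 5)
Your route is genuinely different from the paper's for the combinatorial half, and it is an attractive one: you abstract the incidence data to a graph $G$ on the seven points of $\S_H$ and pin $G$ down purely from the degree condition of Lemma~\ref{l2}, the fact that non-adjacent pairs have exactly one common neighbour (two planes of $\E$ inside the $4$-dimensional $H$ must meet, and by \eqref{p} the intersection point lies in $\S_H$), and the triangle--line correspondence. I checked the counting: $b\in\{4,6\}$ is right, the $4$-cycle obstruction does kill $b=6$, and the sunflower/triangle dichotomy for the three $2$-element neighbourhoods correctly leaves only the triangle-plus-apex-plus-matching graph, which is indeed the graph of Example~\ref{badex}. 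The paper instead starts from a $3$-point plane $\pi_{P_2}$ guaranteed by Lemma~\ref{l2} and works through the geometric cases directly; your version isolates the combinatorics more cleanly, at the cost of then having to re-enter the geometry to coordinatise.

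There is, however, one genuine gap in the coordinatisation step. You write that the three lines through the apex $c$ reach the mutually non-adjacent points $1,2,3$, ``so $\{c,1,2,3\}$ is a frame of $\pi_c$ and $3=c+1+2$.'' Mutual non-adjacency of $1,2,3$ only tells you that no line of $\E_H$ contains two of them; it does not prevent $1,2,3$ from being collinear on a line of $\PG(5,2)$ that is not in $\E_H$. Since $1,2,3$ lie one on each of the three lines of $\pi_c$ through $c$, exactly two configurations are a priori possible: $3=1+2$ (collinear) or $3=c+1+2$ (frame), and your argument does not distinguish them. This is precisely the point where the paper invokes Lemma~\ref{l3}: if the three degree-$2$ points were collinear on a line $m$, then $\Sigma=\langle \ell,m\rangle$ would be a solid containing the three planes $\pi_w,\pi_x,\pi_y$ through $\ell$, hence at least $7$ lines of $\E_H$ but only $6$ (or, if $c\in\Sigma$, then $7$ points but at least $10$ lines) of the points of $\S_H$, contradicting Lemma~\ref{l3}. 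You do cite Lemma~\ref{l3} for the final two lines, but it is needed here as well. With that insertion, and with the (finite, explicit) verification that the six uncovered points split into two lines in only one way -- a check the paper also relegates to Example~\ref{badex} -- your argument closes.
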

\begin{proof}
Let $\S_H = \{P_i \mid i = 1, \dots, 7\}$ be such that $\langle P_1, \dots, P_7 \rangle$ is not a plane. Then, by Lemma~\ref{l2}, there is a plane, say $\pi_{P_2}$ such that $|\S_H \cap \pi_{P_2}| = \{P_1, P_2, P_3\}$. Since $\pi_{P_i} \cap \pi_{P_2} \in \{P_1, P_3\}$, if $i = 4, \dots, 7$, and \eqref{p} holds true, we get that $\S_H \subset (\pi_{P_1} \cup \pi_{P_3})$, and therefore $|\S_H \cap \pi_{P_1}| = |\S_H \cap \pi_{P_3}| = 4$ and $P_3\notin P_1P_2$. Let $P_4, P_7 \in \pi_{P_3} \setminus \pi_{P_2}$ and $P_5, P_6 \in \pi_{P_1} \setminus \pi_{P_2}$ be such that $P_4 \in \pi_{P_5}$ and $P_7 \in \pi_{P_6}$. If both $\{P_1, P_5, P_6\}$ and $\{P_3, P_4, P_7\}$ were lines, then $\pi_{P_6} \cap \pi_{P_4} = P_5 P_7$. It follows that $P_5 P_7 \in \E_H$, but the point $P_7$ is not contained in $\pi_{P_5}$, because the four points in it are $P_1$, $P_4$, $P_5$ and $P_6$, a contradiction. If the points $P_1$, $P_5$ and $P_6$ are not collinear, then necessarily $\{P_3, P_4, P_7\}$ is a line, otherwise $\pi_{P_6} \cap \pi_{P_4}$ would consist of a point of $H$ incident with $3$ lines of $\E_H$, and distinct from $P_1, \dots, P_7$, a contradiction. Therefore, we may assume that $\{P_1, P_5, P_6\}$ are not collinear, whereas $\{P_3, P_4, P_7\}$ is a line. Notice that $P_2$, $P_5$ and $P_6$ are not collinear, otherwise $\Sigma = \langle P_2, P_3, P_4, P_5, P_6, P_7 \rangle$ is a solid and $\pi_3, \pi_4, \pi_7$ are the three planes of $\Sigma$ through the line $P_3 P_4$. In particular $\Sigma$ contains at least $7$ lines of $\E_H$ and precisely $6$ points of $\S_H$, contradicting Lemma~\ref{l3}. It follows that $P_1, P_2, P_5, P_6$ form the complement of a line in a plane $\gamma$ and $\ell = \{P_3, P_4, P_7\}$ is a line disjoint from $\gamma$. %By using $\PGL(5, 2)$, we may assume w.l.o.g. that $\gamma$ is the plane $x_3 = x_4 = 0$, $\ell$ is the line $x_1=x_2=x_5=0$ and $\gamma \cap \S_H$ is the complement of the line $x_3=x_4=x_1+x_2+x_5=0$.
The configuration of the planes $\pi_{P_1},\dots,\pi_{P_7}$ as in Example~\ref{badex} arises. As explained there, this determines also the final two lines of $\E_H$ uniquely. So, we indeed find Example~\ref{badex}.
\end{proof}

\begin{prop}
A $q$-Deza graph $\E$ with parameters $(6, 2, 1, 0; 2)$ arises from Example~\ref{ex:DezasplitCayleyhexagon} or Example~\ref{ex:singer}. 
\end{prop}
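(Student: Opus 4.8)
The plan is to branch on the hyperplane sections of $\E$ and use the structural dichotomy already isolated in Lemma~\ref{l4}. For each of the $63$ hyperplanes $H$ of $\PG(5,2)$, Lemma~\ref{l1} gives $|\E_H|=15$ and $|\S_H|=7$, and Lemma~\ref{l4} says that either $\S_H\subseteq\pi_P$ for some $P\in\S_H$ (so that $\S_H$ is exactly the seven points of the plane $\pi_P$; the \emph{good} case) or $\E_H$ is projectively equivalent to the configuration of Example~\ref{badex} (the \emph{bad} case). The whole argument then splits according to whether a bad hyperplane occurs.

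Suppose first that no hyperplane section is bad, so that for every hyperplane $H$ there is a point $P$ with $\S_H=\pi_P$. Using \eqref{p} one checks that the resulting assignment $H\mapsto P$ is a bijection between the $63$ hyperplanes and the $63$ points of $\PG(5,2)$, i.e.\ a polarity-type correspondence. I would then show that the planes $\{\pi_P\}$ together with their intersection pattern (a common line of $\E$ precisely when the points are collinear in $\E$, a single point or the empty set otherwise) satisfy the incidence axioms of a generalized hexagon of order $(2,2)$: three coplanar lines through each point, and the intersection data forcing diameter $6$ and girth $12$ in the incidence graph. By the classification of Cohen and Tits~\cite{cohentits} the unique generalized hexagon of order $(2,2)$ admitting a regular embedding in $\PG(5,2)$ is the split Cayley hexagon $\mathcal{H}(2)$, whose embedding is unique up to projectivity; hence $\E$ arises from Example~\ref{ex:DezasplitCayleyhexagon}.

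Suppose next that some hyperplane $H$ is bad, so $\E_H$ is projectively equivalent to Example~\ref{badex}. The aim is to show that this single section rigidly determines all of $\E$ up to projectivity and that the outcome coincides with the Singer construction of Example~\ref{ex:singer}. Concretely, I would reconstruct, for each point $Q\notin H$, the plane $\pi_Q$ from the line $\pi_Q\cap H$ and the incidence constraints imposed by \eqref{p} and the Deza condition $|(N_\E(X)\cap N_\E(Y))\setminus\{X,Y\}|\in\{0,1\}$; these leave only a single projective-equivalence class of completions, which is then matched with the three projectively equivalent Singer graphs via the orbit count of Remark~\ref{prop-badex} (in particular the value $z=1536$) and the stabilizer data recorded in Example~\ref{ex:singer}.

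The main obstacle is the rigidity step in the second case: proving that the fifteen lines of the bad section determine the remaining forty-eight lines of $\E$ uniquely, up to the action of the relevant stabilizer, and that the unique completion is exactly the Singer example rather than some a priori unforeseen graph. This is where the delicate bookkeeping concentrates---tracking the planes $\pi_Q$ for $Q\notin H$, counting the solids meeting the configuration in a single line, and reconciling these with the $\PGL(6,2)$-orbit sizes---and it is the step most likely to require the computer-algebra verification already invoked in Example~\ref{ex:singer}.
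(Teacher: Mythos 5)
Your overall strategy coincides with the paper's: split on whether some hyperplane section $\E_H$ is of the exceptional type of Example~\ref{badex}, recognize a generalized hexagon in the uniform case, and force the Singer example in the exceptional case. However, in both branches the decisive step is asserted rather than proved, so as it stands the proposal is a roadmap with genuine gaps.

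In the first branch, ``the intersection data forcing diameter $6$ and girth $12$'' is exactly the content that must be established, and the hardest part of it is excluding cycles of length $10$ in the incidence graph. The paper's argument is that a pentagon of lines of $\E$ would span a hyperplane $H$ with $\S_H=\pi_P$, and then some vertex of the pentagon in $H\setminus\pi_P$ would lie on two lines of $\E_H$, contradicting Lemma~\ref{l1}; this is precisely where the hypothesis that \emph{every} hyperplane is good enters, and your sketch never uses that hypothesis. The auxiliary claim that $H\mapsto P$ is a bijection is neither proved nor needed. (Also, once the hexagon is identified, the relevant citation for the uniqueness of the embedding is Thas--Van Maldeghem \cite{TV}, not only the abstract classification of \cite{cohentits}.)

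In the second branch the gap is larger, and you acknowledge it yourself. The paper does not reconstruct $\E$ from a single bad section: it picks a solid $\Sigma$ of $H$ containing exactly one line $r$ of $\E_H$, shows via Lemma~\ref{l3} and Remark~\ref{prop-badex} that the other two hyperplanes $H',H''$ through $\Sigma$ are also bad with $r=\{\Sigma\cap\S_H,\Sigma\cap\S_{H'},\Sigma\cap\S_{H''}\}$, and then runs a Magma search over the $2\times1536^2$ candidate pairs $\{(\S_{H'},\E_{H'}),(\S_{H''},\E_{H''})\}$, cutting down to $1120$ by a local coplanarity test and finally using a count of ``good'' lines ($\leq 20$, with equality forcing the Singer configuration). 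Your proposed reconstruction of each $\pi_Q$ for $Q\notin H$ from $\pi_Q\cap H$ and \eqref{p} is not carried out, and there is no argument that it yields a unique completion; you would in effect have to reproduce the same computation. Until that rigidity step is supplied, the exceptional case is not closed.
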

\begin{proof}
Assume that for every hyperplane $H$, there exists a point $P \in \S_H$ such that $\S_H = \pi_P$. Then $H \setminus \pi_P$ consists of the $24$ points $Q \in \PG(5, 2) \setminus \pi_P$ such that $|\pi_Q \cap \pi_P| = 1$ and through a point of $H \setminus \pi_P$ passes exactly one line of $\E_H$. Consider the incidence structure $(\mathcal{P}, \E)$, where $\mathcal{P}$ is the set of points of $\PG(5, 2)$, with incidence the natural one. We claim that $(\mathcal{P}, \E)$ is a generalized hexagon. Let $\Gamma$ be the (simple, bipartite) incidence graph of $(\mathcal{P}, \E)$. The graph $\Gamma$ is 3-regular. Since two points determine a unique line, there are no cycles of length 4 in $\Gamma$. Also, there are no lines of $\E$ forming a triangle or a quadrangle, hence there are no cycles of length $6$ or $8$ in $\Gamma$. So the girth of $\Gamma$ is at least 10. If there were a cycle of length 10 in $\Gamma$, then we would find a pentagon consisting of lines of $\E$. Such a pentagon has to span a hyperplane $H$, where $\S_H = \pi_P$, for some $P \in \S_H$. Then we find a vertex of the pentagon $Q \in H \setminus \pi_P$ such that there are two lines of $\E_H$ through $Q$, a contradiction. Finally, it can be easily checked that $\Gamma$ contains cycles of length $12$ and the result follows from \cite{TV}.

Assume that there exists a hyperplane $H$ such that $\S_H \not\subset \pi_P$, for all $P \in \S_H$. Then by Lemma~\ref{l4} we may assume that $\S_H$ and $\E_H$ are as in Example~\ref{badex}. Let $\Sigma$ be one of the three solids of $H$ containing precisely one line of $\E_{H}$, say $r$, see Remark~\ref{prop-badex}. Then $\Sigma \cap \S_H$ is a point $R$ belonging to $r$. Let $H'$ and $H''$ be the other two hyperplanes containing $\Sigma$. Then $r$ is the unique line of $\E_{H'}$ (and of $\E_{H''}$) contained in $\Sigma$. Therefore $\Sigma \cap \S_{H'}$ (and $\Sigma \cap \S_{H''}$) consists of one point belonging to $r \setminus \{R\}$. Note that necessarily $r = \{\Sigma \cap \S_H, \Sigma \cap \S_{H'}, \Sigma \cap \S_{H''}\}$. If $\S_{H'} = \pi_P$, for some $P \in \S_{H'}$, then $\Sigma \cap \pi_P = \Sigma \cap \S_{H'}$, where $|\Sigma \cap \S_{H'}| = 3$, a contradiction. Hence $\S_{H'} \not\subset \pi_P$, for all $P \in \S_{H'}$. Similarly $\S_{H''} \not\subset \pi_P$, for all $P \in \S_{H''}$. By Lemma~\ref{l4}, it follows that both $(\S_{H'}, \E_{H'})$, $(\S_{H''}, \E_{H''})$ are projectively equivalent to $(\S_H, \E_H)$ as described in Example~\ref{badex}. By Remark~\ref{prop-badex}, there are $2 \times 1536^2$ couples $\left\{(\S_{H'}, \E_{H'}), (\S_{H''}, \E_{H''})\right\}$ such that 

$r$ is the unique line of $\E_{H'}$ in $\Sigma$, the unique line of $\E_{H''}$ in $\Sigma$, and such that $r = \{\Sigma \cap \S_H, \Sigma \cap \S_{H'}, \Sigma \cap \S_{H''}\}$. With the aid of Magma~\cite{magma}, we determined that only $1120$ among these couples are such that for each point of $\Sigma \setminus r$ the three lines of $\E_{H} \cup \E_{H'} \cup \E_{H''}$ through it are coplanar. For a fixed couple $\left\{(\S_{H'}, \E_{H'}), (\S_{H''}, \E_{H''})\right\}$, let $\mathcal{X}$ denote the set $\Sigma \cup \S_{H} \cup \S_{H'} \cup \S_{H''}$, where we interpret $\Sigma$ as its point set. Then $\E_{H} \cup \E_{H'} \cup \E_{H''}$ is a set of $43$ lines such that through a point $Q$ there pass one or three (coplanar) lines of $\E_{H} \cup \E_{H'} \cup \E_{H''}$, according as $Q \in \PG(5, 2) \setminus \mathcal{X}$ or $Q \in \mathcal{X}$, respectively. Hence, for a line $m$ disjoint from $\mathcal{X}$, we find three planar pencils determined by $(m, n)$, $(m, n')$ and $(m, n'')$, where $n$, $n'$ and $n''$ are the unique lines of $\E_{H}$, $\E_{H'}$ and $\E_{H''}$ incident with $m$, respectively. The line $m$ is said to be {\em good} with respect to the couple $\{(\S_{H'}, \E_{H'}),(\S_{H''}, \E_{H''})\}$, if for each of the three planar pencils determined by $(m, n)$, $(m, n')$ and $(m, n'')$, its third line is disjoint from $\mathcal{X}$. Some computations performed with Magma show that the previous mentioned couples $\left\{(\S_{H'}, \E_{H'}), (\S_{H''}, \E_{H''})\right\}$ have at most $20$ good lines and in case of equality the $20$ good lines together with $\E_{H} \cup \E_{H'} \cup \E_{H''}$ give rise to a line-set projectively equivalent to Example~\ref{ex:singer}.
\end{proof}

\section*{Acknowledgement}

This work has been partially supported by Croatian Science Foundation under the projects 4571 and 5713.

\end{document}